\title[Porous media equation]{Lack of uniqueness for weak solutions of the incompressible porous media equation}
\author{Diego Cordoba, Daniel Faraco and  Francisco Gancedo}
\date{December 16, 2009}
\address{Diego C\'ordoba, Instituto de Ciencias Matem\'aticas, Consejo Superior de Investigaciones Cientificas, Calle Serrano 123, 28006 Madrid, Spain.} \email{dcg@icmat.es}
\address{Daniel Faraco, Instituto de Ciencias Matem\'aticas CSIC-UAM-UCM-UC3M and Department of Mathematics, Universidad Aut\'onoma de Madrid, 28049 Madrid, Spain.} \email{
daniel.faraco@uam.es}
\address{Francisco Gancedo,
Department of Mathematics, University of Chicago, 5734 University Avenue, Chicago, IL 60637, U.S.A.} \email{fgancedo@math.uchicago.edu}
\begin{document}

\sloppy


\theoremstyle{plain}
\newtheorem{theorem}{Theorem}[section]
\newtheorem{lemma}{Lemma}[section]
\newtheorem{cor}{Corollary}[section]
\newtheorem{Prop}{Proposition}[section]
\theoremstyle{definition}
\newtheorem{Def}{Definition}[section]
\newtheorem{remark}{Remark}[section]

\newtheorem{Prob}{Problem}

\def\halmos{{\ \vbox{\hrule\hbox{\vrule
 height1.3ex\hskip0.8ex\vrule}\hrule}}\par \medskip}



\def \t{\theta}
\def \l{\lambda}
\def\e{\varepsilon}
\def \g{\nabla}

\def \D{\mathbb D}
\def\U{\mathcal{U}}
\def\F{\sharp ({\R})}
\def\L{\mathcal{L}}
\def\C{{\mathbb{C}}}
\def\N{{\mathbb{N}}}
\def\H{\mathcal{H}}
\def\ES{\mathbb E}
\def\M{\mathcal{M}}
\def\R{\mathbb{R}}
\def \O{\mathcal{O}}
\def\Tp{\mathbb{T}}

\def \cl{\textrm{cl }}
\def \i{\textrm{int }}
\def \c{\partial_{ z}}
\def \a{\partial_{\overline z}}
\newcommand{\rank}{\mathrm{rank}}
\newcommand{\di}{{\rm div}\thinspace}
\newcommand{\curl}{{\rm curl}\thinspace}
\newcommand{\grad}{\nabla}


\def\w {W^{1,p}}
\def\2x2{{\R^{2 \times 2}}}
\def\p{W^{1,p}(Q ,\bf{R}^2)}
\def\h{W^{1,2}(\Omega,\bf{R}^2)}
\def\mxn{{\R^{m \times n}}}
\def\loc{\textrm{loc}}
\def \sym{{\R_{\textrm{sym}}^{2 \times 2}}}
\def \s {\buildrel  {\star} \over {\rightharpoonup} }
\def \W{\rightharpoonup}
\def\se{\{g_j(x)\}_{j=1}^{\infty}}
\def\su{\{f_j\}_{j=1}^{\infty}}
\def\de{\{Df_j(z)\}_{j=1}^{\infty}}
\def\T{\textrm{Tr}}
\def\E{E_{\{k,-k\}}}
\def\k{E_{K^{-1}}}
\def\x{B_\infty}
\def\spt{\mathrm{spt}\,}
\newcommand{\dist}{\mathrm{dist}\,}

\def \S{Df_\sharp ( \mathcal{L}_\Omega^n)}
\def \y{$\w$-GYM }
\renewcommand{\div}{\mathrm{div}\,}
\newcommand{\tr}{\mathrm{Tr}}
\newcommand{\supp}{\mathrm{supp}}


\maketitle

\begin{abstract}
In this work we consider weak solutions of the incompressible 2-D
porous media equation. By using the approach of De Lellis-Sz\'ekelyhidi we prove
non-uniqueness for solutions in $L^\infty$ in space and time.

\end{abstract}

\maketitle

\section{Introduction}

The incompressible 2-D porous media equation (IPM) is described by
$$
\rho_t+\nabla\cdot (v\rho)=0
$$
where the scalar $\rho(x,t)$ is the density of the fluid. The
incompressible velocity field
$$
\nabla\cdot v = 0
$$
is related with the density by the well-known Darcy's law
\cite{bear}
$$
\frac{\mu}{\kappa}v=-\nabla p-(0,g\rho)
$$
 where $\mu$ represents the viscosity of the fluid, $\kappa$ is the permeability of the medium, $p$ is the pressure of the fluid and $g$ is acceleration due to gravity. Without lost of generality we will consider $\mu/\kappa=g=1$.

 In this paper we study the weak formulation of this system
and we construct non trivial solutions with $\rho, v\in
L^\infty(\Tp^2\times [0,T])$ from initial data $\rho(x,0)=0$. Here
$\Tp^2$ is the two dimensional flat torus.

We define a weak solution of IPM $(\rho,v,p)$ if
$\forall\varphi,\chi,\lambda_1,\lambda_2\in C_c^{\infty}([0,T)\times\Tp^2)$ with $\lambda=(\lambda_1,\lambda_2)$
the following identities hold
\begin{eqnarray}\label{defweak}
 \int\limits_0^T\int\limits_{\Tp^2}\rho(x,t)
\left(\partial_t\varphi(x,t)+
v(x,t)\cdot\nabla\varphi(x,t)\right)dxdt \\ +
\int\limits_{\Tp^2}\rho_0(x)\varphi(x,0)dx=0,\nonumber
\end{eqnarray}
\begin{equation}\label{defweak2}
\int\limits_0^T\int\limits_{\Tp^2}v(x,t)\cdot\nabla \chi(x,t)dxdt=0,
\end{equation}
\begin{equation}\label{defweak3}
\int\limits_0^T\int\limits_{\Tp^2}(v(x,t)+\grad p(x,t)+(0,\rho(x,t)))\cdot\lambda(x,t) dxdt=0.
\end{equation}

For initial data in the Sobolev class $H^s(\Tp^2)$ ($s>2$) there
is local-existence and uniqueness of solutions in a classical
sense and global existence is an open problem \cite{CGO}. It is
known the existence of weak solutions, where the motion takes
place in the interface between fluids with different constant
densities, modeling the contour dynamics Muskat problem \cite{CG}.
The existence of weak solutions for general initial data is not
known. In this context we emphasize that the solutions we construct satisfy
$$
\limsup_{t\rightarrow 0^+}\|\rho\|_{H^s}(t)=+\infty
$$
for any $s>0$ (see Remark \ref{irregularity}).

From Darcy's law and the incompressibility of the fluid we can write the velocity as a singular integral operators with respect to the density as follows
\begin{eqnarray*}
    v(x,t)= PV \int_{\R^2} \Omega(x-y)\,\rho(y,t)dy\, - \frac{1}{2}\left(0,\rho(x)\right),\quad x\in \R^2,
\end{eqnarray*}
where the kernel is a Calderon-Zygmund type
$$
\Omega(x)=\frac{1}{2\pi}\left(-2\frac{ x_1
x_2}{|x|^4},\frac{x_1^2-x_2^2}{|x|^4}\right).
$$
The integral operator is defined  in the Fourier side by
\begin{eqnarray*}
\widehat{v}(\xi)=
(\frac{\xi_1\xi_2}{|\xi|^2},-\frac{(\xi_1)^2}{|\xi|^2})\widehat{\rho}(\xi).
\end{eqnarray*}

This system is analogous to the 2-D surface
Quasi-geostrophic equation (SQG) \cite{CGO}, in the sense that is
an active scalar that evolves by a nonlocal incompressible velocity given by
singular integral operators. It follows that, for Besov spaces, if
the weak solution $\rho$ is in $L^3([0,T]\times B_3^{s,\infty})$
with $s>\frac13$ then the $L^2$ norm of $\rho$ is conserved
\cite{JWu}. This result frames IPM in the theory of Onsager's
conjecture for weak solutions of 3-D Euler equations
\cite{CET},\cite{Onsager}. However there is an extra cancelation, for SQG, due to the symmetry of the velocity
given by
\begin{eqnarray*}
\widehat{v}(\xi)=
i(-\frac{\xi_2}{|\xi|},\frac{\xi_1}{|\xi|})\widehat{\rho}(\xi)
\end{eqnarray*}
that provides global existence for weak solution with initial data
in $L^2(\Tp^2)$ \cite{Resnick}. Furthermore, one can find a
substantial difference between both systems for weak solutions of
constant $\rho$ in complementary domains, denoted in the
literature as patches \cite{Majda}. For IPM the Muskat problem
presents instabilities of Kelvin-Helmholtz's type \cite{CG} and
there is no instabilities for SQG (\cite{Rodrigo},\cite{G}).

The first results of non-uniqueness for the incompressible Euler
equations are due to Scheffer \cite{Scheffer} and Shnirelman
\cite{Shnirelman} where the velocity field is compacted supported
in space and time with infinite energy. The method of the proof we
use in this paper is based on understanding the equation as a
differential inclusion in the spirit of Tartar
\cite{Tartar79,Tartar82}. In a ground breaking recent paper De
Lellis-Sz\'ekelyhidi \cite{DeLellisSzekelyhidi1} showed that with
this point of view the modern methods for solving differential
inclusions  could be reinterpreted and adapted to construct wild
solutions to the Euler equation with finite energy. Our plan was
to investigate the scope of this approach in the context of the
porous media equation. However it turned out that the situation is
different and we have to take different routes at several places
which might be of interest for the theory of differential
inclusions. We describe them shortly. Firstly, using the
terminology of this area that will be recovered in the first
section, the special role of the direction of gravity yields
certain lack of symmetry in the wave cone $\Lambda$. Moreover the
set $K$ describing the non linear constraint belongs to the zero
set of a $\Lambda$ convex function. Then it follows  that,
opposite to Euler, the $\Lambda$ convex hull does not agree with
the convex hull and more relevant $K \subset\partial K^{\Lambda}$.
This is  an obstruction for the available  versions of convex
integration, the ones based on  Baire category
\cite{DacorognaMarcellini97,Kirchheim03,MuSy01} and direct
constructions \cite{Gromov, KMS02,MullerSverak03}. Thus we do not
see anyway to use the method to produce solutions to the equation
with constant pressure and velocity for some time. Surprisingly,
an easy argument shows that the states in $\rm{int}(K^\Lambda)$
can still be used to produce periodic weak solutions starting with
$\rho=v=0$. This leaves the difficulty of choosing a proper subset
$\tilde{K} \subset K $ with sufficient large $\Lambda$ hull. In
general the computation of $\Lambda$ hull might be rather
complicated. In this work  we argue differently to suggest a more
systematic method:  Instead of fixing a set and computing the
hull, we pick a reasonable matrix $A$ and compute $(A+\Lambda) \cap
K$. Then by \cite[Corollary 4.19]{Kirchheim03} it is enough to
find a set $\tilde{K} \subset (A+\Lambda) \cap K$ such that $A \in
\tilde{K}^{c}$ to find what are called degenerate $T4$
configurations supported in $\tilde{K}^c$. To our knowledge this
is the first time they are used to produce exact solutions. Then,
we are able to choose the set $\tilde{K}$ carefully so that the
construction is stable and allow us to solve the inclusion. Our
proof of this later fact is related, to the known ways of solving
differential inclusions based on $T4$ configuration
\cite{Szpolyconvex, KMS02, Kirchheim03, MullerSverak03} but we
have made an effort to keep the whole paper self contained. In
particular we do not appeal  to the general theory of laminates
\cite{Pedregal93,Pedregalbook, MullerSverak03} to minimize the
amount of new terminology and make the construction more explicit.

The paper is organized as follows. In Section 2 we review the
De Lellis-Sz\'ekelyhidi   approach to obtain weak solutions to a
nonlinear PDE . Section 3 is devoted to put IPM in this framework, the computation of the wave cone and
the corresponding potentials. Section 4 is devoted to the geometry
part of the construction and Section 5 to the analytic one. In
this last section we include some remarks about the nature of our
solution as well as why this approach seems to fail for
constructing  weak solutions compactly supported in space and
time.

\section{General strategy and notation}
 The strategy to construct wild solutions to a non linear PDE
, understanding it as a differential inclusion, builds upon the following main steps.
Consider the equation\footnote{Here $n$ and $m$ are
suitable spaces dimensions that we keep general at this point.}
 $$P(u)=0,\qquad u:\R^n\rightarrow \R^m,$$
where $P$ is a general differential operator.

\textbf{Tartar framework:} Decompose the non linear PDE into a
linear equation (Conservation law) $\L(\tilde{u})=0$  and a
pointwise constraint $\tilde{u}(x) \in K$. Possibly this
requires to introduce new state variables $\tilde{u}$ to linearize
the  non linearities

\[ P(u)=0 \iff \begin{cases} \mathcal{L}(\tilde{u})=0 \\
\tilde{u}(x) \in K \end{cases}. \]

Specially easy to bring to this framework are equations of the
form

\[\mathcal{L}(F_i(u))=0 \]

where $F_i:\R^m\to \R$ $i=1,...,l$ are
some functions possibly non linear. Then one considers new states
variables $\tilde{u}_i=F_i(u),$ which clearly satisfy the
conservation law. Then the pointwise constraint amounts to

\[ q_i=F_i(u).\]

 \textbf{Wave Cone:} As discovered by Tartar \cite{Tartar79} in his development of
compensated compactness to such conservation law one can associate
a corresponding wave cone $\Lambda_{\L},$ defined as follows:
\[\Lambda_{\L}=\{A\in \R^{m}\times\R^l :\, \exists \xi  \textrm{ such that for }  z(x)=Ah(x \cdot
\xi)\,,\L(z)=0 \},\]
where $h:\R\rightarrow\R$ is an arbitrary function. That is, the wave cone corresponds to one
dimensional solution to the conservation law.

\textbf{Potential:} If $A \in \Lambda_{\L}$ we can build $z:\R^n \to
\R^{m}\times\R^l$ such that

\begin{equation*}
\mathcal{L}(z)=0   \textrm{ and } z(\R^n) \subset \sigma_A
\end{equation*}
where $\sigma_A=\{t A:t \in \R \}$ is the line passing through $A$.
The wild solutions are made by adding one dimensional oscillating
functions in different directions $A$. For that it is needed to
localize the waves. This is done by finding a suitable potential,
i.e.  a differential operator $D$ such that
\[ \mathcal{L}(D )=0, \]
and the plane wave solutions to the conservation law
can be obtained from the potential.

\textbf{Pointwise constraint:} In the pointwise constraint one adds the additional features
desired for the weak solution (fixed energy, fixed values, etc). This describes
a new set $\tilde{K}\subset K$. Then the solution is obtained by two
additional steps.

\begin{itemize}

\item Find a bigger set $\U$ so that there exists at least a solution
 to the relaxed system
\[ U_0 \in \U,\]
with the proper boundary conditions. In
practice the set $\U \subset \textrm{int} (K^\Lambda)$, the
so-called $\Lambda$ convex hull of $K$. \item Then one fixes a
domain $\Omega$ compact in space and time and in properly chosen
subdomains of $\Omega$ one adds infinitely many small
perturbations to find a function $U_\infty \in K$ a.e $(x,t) \in
\Omega$ and $U_\infty=U_0 $ outside of $\Omega$. If $U_0$ is a
solution to the conservation law, so is $U_\infty$. Notice that then $U_0
\in K$ and to be sure that $U_\infty \neq U_0$, we need that $ U_0
\notin \tilde{K}$.

\end{itemize}

There is no standard way to produce the weak solutions from
$K^\Lambda$ and it is not even always possible. In \cite{DeLellisSzekelyhidi1} (for Euler equations)
it is enough that $(0,0) \in \mathcal{U}=K^{c}$, which is open in the
spaces where the images of the potential lie and that for every
state $U \in \mathcal{U}$, $A_{\tilde{u}} \in \Lambda$ such that
the segment which end points $\tilde{u} \pm A \in \U$ and

\begin{equation*}\label{mA}
|A| \ge C \textrm{dist}(\tilde{u},  K ).
\end{equation*}

As discussed in the introduction, this strategy fails in our
context and needs to be modified.

\section{Porous media equation}

Here we start by recalling the 2-D IPM system
\begin{equation}\label{IPM}
\begin{cases}
  \partial_t \rho+\nabla\cdot (v \rho)=0 \\
\nabla\cdot v=0 \\
 v=-\nabla p- (0,\rho).
\end{cases}
\end{equation}
Our task below is to adapt section 2 to the IPM system.
\subsection{Porous media equation in the Tartar framework}

Given functions $(\rho,v,q)$ we define the differential operator $\mathcal{L}$
by
\begin{equation}
\L(\rho,q,v)= \begin{cases}
   \partial_t \rho+ \nabla\cdot q \\
 \nabla\cdot v \\
 \textrm{Curl}(v + (0,\rho))\\
 \int_{\Tp^2} v \\
 \int_{\Tp^2} \rho
\end{cases}
\end{equation}

\begin{Prop}
A pair $(\rho,v)\in  L^\infty(\Tp^2\times [0,T],\R \times \R^2)$ is a weak
solution to IPM if and only if we find $u=(\rho,v,q)\in L^\infty(\Tp^2\times [0,T],\R \times \R^2\times \R^2)$ such that,

\[\begin{cases} \mathcal{L}(u)=0 \\ q=\rho v \end{cases}\]

\end{Prop}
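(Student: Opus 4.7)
The plan is to verify the equivalence directly, matching each component of $\mathcal{L}(u)=0$ (together with the pointwise constraint $q=\rho v$) to one of the identities (\ref{defweak})--(\ref{defweak3}). The only non-cosmetic step is a Hodge-type reconstruction of the pressure on $\Tp^2$; everything else is integration by parts.

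For the forward direction, assume $(\rho,v,p)$ is a weak solution, and set $q:=\rho v\in L^\infty$. Testing (\ref{defweak}) with $\varphi\in C_c^\infty((0,T)\times\Tp^2)$ gives $\partial_t\rho+\nabla\cdot q=0$ in $\mathcal{D}'$; equation (\ref{defweak2}) is precisely $\nabla\cdot v=0$; and from (\ref{defweak3}) we have $v+\nabla p+(0,\rho)=0$ in $\mathcal{D}'$, so applying the curl (which kills $\nabla p$) yields $\mathrm{Curl}(v+(0,\rho))=0$. For the mean-zero components I would test (\ref{defweak3}) with $\lambda$ independent of $x$; the gradient term drops out by periodicity and this yields $\int_{\Tp^2}v_1=0$ and $\int_{\Tp^2}(v_2+\rho)=0$ a.e.\ in $t$. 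Taking $\varphi=\varphi(t)$ in (\ref{defweak}) shows that the total mass $\int_{\Tp^2}\rho(\cdot,t)$ is time-independent and equal to $\int\rho_0$, which vanishes in the setting of interest; combining, one obtains the remaining identities $\int v=0$ and $\int\rho=0$.

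Conversely, suppose $u=(\rho,v,q)\in L^\infty$ satisfies $\mathcal{L}(u)=0$ with $q=\rho v$. The first two components of $\mathcal{L}(u)=0$ read off (\ref{defweak}) and (\ref{defweak2}) directly after integration by parts. To produce the pressure identity (\ref{defweak3}), use that $\mathrm{Curl}(v+(0,\rho))=0$ together with $\int_{\Tp^2}v=0$ and $\int_{\Tp^2}\rho=0$: by Hodge theory on $\Tp^2$ (equivalently, by solving $\Delta p=-\partial_{x_2}\rho$ in Fourier with zero-mean convention), there is a periodic $p\in L^\infty_t L^2_x$ with $v+(0,\rho)=-\nabla p$ distributionally, which is exactly (\ref{defweak3}).

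The only real obstacle to watch out for is this pressure recovery and the way the mean-zero entries of $\mathcal{L}$ make it work: on the torus a curl-free field is a gradient only up to an additive constant vector, and those two entries are precisely what normalizes the constant to zero. In particular they cannot be dropped without destroying the equivalence. Apart from that, the whole proof is a clean unpacking of definitions.
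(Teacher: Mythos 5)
Your proof is correct and follows essentially the same route as the paper: the only substantive step in both is the Hodge decomposition on $\Tp^2$, which shows that a curl-free field whose harmonic (mean) part vanishes is an exact gradient, thereby recovering $p$ from the third and the two mean-zero components of $\mathcal{L}$. You are in fact slightly more careful than the paper in the forward direction, where you correctly note that the conditions $\int_{\Tp^2}v=\int_{\Tp^2}\rho=0$ require $\int_{\Tp^2}\rho_0=0$ (as holds in the setting of interest), a point the paper leaves implicit.
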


{\it Proof.} The two first equations are obvious. The third follows
from the Hodge decomposition in $\Tp^2$. Namely consider a vector
field $f\in L^2(\Tp^2)$ with $\frac{1}{(2\pi)^2}\int_{\Tp^2} f
dx=0$ and $\curl f=0$. We claim that the Hodge decomposition
implies that $f$ is a gradient field. Namely, every field $f\in
L^2(\Tp^2)$ on the torus $\Tp^2$ has a unique orthogonal
decomposition
$$
f=\frac{1}{(2\pi)^2}\int_{\Tp^2} f dx +w+\grad p,
$$
such that $\di w=0$. But
 $\curl f=0=\curl w$. Now we have that $\di w=0$ and $\curl w=0$,
 then for example
 by using the formula $\curl\curl g=-\Delta g+\grad \di g$ one finds that $\Delta w=0$ and which yields $w=0$.
 Thus,
$$
f=\frac{1}{(2\pi)^2}\int_{\Tp^2} f dx +\grad p=\grad p,
$$

\halmos

It is convenient to write the differential part of $\L$ in a
matrix form. For that to each state $u=(\rho,v,q)$ we associate a
matrix value function
$$U(u):\R\times\R^2\times\R^2  \to \mathbb{M}^{3 \times 3}$$ by
$$U(u)=  \begin{pmatrix} -v_2 -\rho & v_1, & 0 \\
                        v_1 & v_2   & 0 \\
                        q_1 & q_2   & \rho
                        \end{pmatrix}$$
and the subspace  $\mathbb{U}\subset \mathbb{M}^{3 \times 3}$ by $ \mathbb{U}=U(\R^2 \times \R^2 \times \R)$.

The following lemma is straightforward.

\begin{lemma} Let $U=U(\rho,v,q) \in L^2(\Tp^2\times \R)$ with $\frac{1}{(2\pi)^2}\int_{\Tp^2} \rho
dx=\frac{1}{(2\pi)^2}\int_{\Tp^2} v dx=0$. Then the following
statement holds in the sense of distributions:

$$\L(u)=0 \iff \textrm{Div}(U)=0,$$
 where $\textrm{Div}(a_{ij})=
\partial_j a_{ij}$
\end{lemma}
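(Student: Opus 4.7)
The plan is to verify the equivalence by a direct, row-by-row computation. The hypothesis that $\frac{1}{(2\pi)^2}\int_{\Tp^2}\rho\,dx=0$ and $\frac{1}{(2\pi)^2}\int_{\Tp^2}v\,dx=0$ is used only to dispose of the two integral components of $\mathcal{L}(u)$ automatically, so the content of the lemma reduces to matching the three rows of $\textrm{Div}(U)$ with the three differential components of $\mathcal{L}(u)$.

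First I would fix the convention that the three columns of $U$ are indexed by the space-time variables $(x_1,x_2,t)$, so that $\textrm{Div}(U)_i=\partial_{x_1}U_{i1}+\partial_{x_2}U_{i2}+\partial_t U_{i3}$ for $i=1,2,3$. This choice is forced by the entries of $U$: $\rho$ appears alone in the $(3,3)$ slot and must therefore be differentiated in time, while the zeros in positions $(1,3)$ and $(2,3)$ ensure that no spurious time derivatives appear in the stationary curl and divergence equations.

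Then I would simply read off the three rows. Row $3$, with entries $(q_1,q_2,\rho)$, produces $\partial_t\rho+\nabla\cdot q$, which is the continuity equation. Row $2$, with entries $(v_1,v_2,0)$, produces $\nabla\cdot v$. Row $1$, with entries $(-v_2-\rho,\,v_1,\,0)$, produces $-\partial_{x_1}(v_2+\rho)+\partial_{x_2}v_1=-\textrm{Curl}(v+(0,\rho))$. Comparing with $\mathcal{L}(u)$ gives the equivalence in both directions. There is no genuine obstacle here: the identities are pointwise when the entries are smooth and they extend at once to the sense of distributions under the assumed $L^2$ regularity; the only care needed is the bookkeeping of the column-to-derivative correspondence and the (harmless) sign in the curl row.
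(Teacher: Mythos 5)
Your proof is correct and is exactly the direct verification the paper has in mind (the paper simply declares the lemma straightforward and omits the argument); your column-to-variable convention $(x_1,x_2,t)$ matches the one the paper uses implicitly in the proof of the potential lemma, and the sign in the curl row is indeed harmless for the equivalence.
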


\subsection{The wave cone}

We denote by $\Lambda$ the wave cone related to (IPM).

The reason to write the states in matrix form is that the wave
cone is particularly easy to characterize. Namely, we want a plane wave
$z(x)=A h(x \cdot \xi)$ to satisfy

\[Div (z)=0 .\]

We obtain that for each $i$, $h'(x \cdot \xi) \xi_j a_{ij}=0$,
that is

\[A (\xi)=0 .\]

In other words $A \in \Lambda$ if and only if $A$ is a singular matrix. Thus $A \in
\Lambda \cap \mathbb{U}$ if

\begin{equation*}\label{wavecone}
-\rho ( v_2^2+v_1^2+\rho v_2)=0 \iff \rho=\frac{-|v|^2}{v_2}
\textrm{  or } \rho |v|^2=0. \end{equation*}

Observe that fixed $\rho$ this is

\[ v_2^2+v_1^2+\rho
v_2=v_1^2+(v_2+\frac{\rho}{2})^2-\frac{\rho^2}{4} \]

Thus fixed $\rho$ the corresponding $v \in S(
(0,-\frac{\rho}{2}),\frac{|\rho|}{2})$.

\subsection{Potentials}

The aim of this section is to find a differential operator $D$
such that $\mathcal{L}(D)=0$. We define for $\psi,\varphi: \Tp^2 \times \R \to \R$,

\[D(\varphi, \psi)=\begin{pmatrix} \partial_{x_1x_1} \psi
&\partial_{x_1x_2} \psi &0 \\ \partial_{x_1x_2} \psi & -\partial_{x_1x_1} \psi
&0
\\ -\partial_{tx_1} \psi -\partial_{x_2} \varphi & -\partial_{t x_2} \psi
+\partial_{x_1} \varphi & \Delta \psi \end{pmatrix} -\begin{pmatrix}
\Delta \psi & 0 &0 \\ 0 & 0 & 0 \\ 0 &0 &0 \end{pmatrix} \]

Or more compactly written

\[
\begin{pmatrix} -\partial_{x_2x_2} \psi &\partial_{x_1x_2} \psi &0 \\
\partial_{x_1x_2} \psi & -\partial_{x_1x_1} \psi &0
\\ -\partial_{tx_1} \psi -\partial_{x_2} \varphi & -\partial_{t x_2} \psi
+\partial_{x_1} \varphi & \Delta \psi \end{pmatrix}\]

Clearly $D(\varphi,\psi) \in \mathbb{U}$.

\begin{lemma}
Let $\Omega \subset \Tp^2 \times \R$ be a bounded open set, $\varphi \in
W^{2,\infty}(\Omega)$ and $\psi\in W^{1,\infty}(\Omega)$. Then it holds that
\[ \mathcal{L}D(\varphi,\psi)=0 \textrm{ in } \Omega \]
in the sense of distributions.
\end{lemma}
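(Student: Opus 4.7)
The plan is to verify the three differential components of $\mathcal{L}$ directly, after reading off from $D(\varphi,\psi) \in \mathbb{U}$ the state $(\rho,v,q)$ it encodes. Matching entries with $U(u)$ one obtains
\[ \rho = \Delta\psi,\qquad v = (\partial_{x_1 x_2}\psi,\, -\partial_{x_1 x_1}\psi),\qquad q = (-\partial_{tx_1}\psi - \partial_{x_2}\varphi,\; -\partial_{tx_2}\psi + \partial_{x_1}\varphi). \]
Under the assumed regularity, all these expressions belong to $L^\infty(\Omega)$ (second distributional derivatives of a Lipschitz function lie in $L^\infty$), so each further differentiation can be performed unambiguously in $\mathcal{D}'(\Omega)$.

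I would check the three equations one by one by pairing with test fields in $C_c^\infty(\Omega,\mathbb{R}^3)$. Incompressibility, $\nabla\cdot v = \partial_{x_1 x_1 x_2}\psi - \partial_{x_2 x_1 x_1}\psi$, vanishes at once by the distributional commutation of mixed partials. For the curl-free constraint one notes the key algebraic identity
\[ v + (0,\rho) \;=\; (\partial_{x_1 x_2}\psi,\; -\partial_{x_1 x_1}\psi + \Delta\psi) \;=\; (\partial_{x_1 x_2}\psi,\; \partial_{x_2 x_2}\psi) \;=\; \nabla(\partial_{x_2}\psi), \]
which exhibits $v+(0,\rho)$ as a distributional gradient, hence automatically curl-free. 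This is really the motivation for the precise shape of $D$: the function $\psi$ plays the role of a stream-function-type potential for the curl-free vector field $v+(0,\rho)$.

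Finally, for the transport equation, I would write $\partial_t \rho + \nabla\cdot q$ and separate $\psi$- and $\varphi$-contributions. The $\psi$-terms $\partial_t\Delta\psi - \partial_{x_1}\partial_t\partial_{x_1}\psi - \partial_{x_2}\partial_t\partial_{x_2}\psi$ cancel, and the $\varphi$-terms reduce to $-\partial_{x_1 x_2}\varphi + \partial_{x_2 x_1}\varphi = 0$, again by commutation of mixed partials. Here one sees the role of $\varphi$: it supplies the extra freedom needed to realize the divergence part of $q$ without disturbing the first two rows of $D$. There is no serious obstacle; the entire proof is a distributional Schwarz-type calculation carried out test-field by test-field, and the regularity hypotheses are exactly what is needed to make those pairings well defined.
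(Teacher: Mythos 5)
Your check of the three differential components is correct and coincides in substance with the paper's proof: the paper verifies $\textrm{Div}(D(\varphi,\psi))=0$ row by row, which is exactly your Schwarz-type cancellation organized by matrix rows instead of by equations (your identity $v+(0,\rho)=\nabla(\partial_{x_2}\psi)$ is a nice way of packaging the curl computation, and your $\varphi$-cancellation is the paper's identity III). However, the operator $\mathcal{L}$ defined in Section 3 has \emph{five} components, not three: in addition to the transport, divergence and curl equations it imposes the normalizations $\int_{\Tp^2} v\,dx=0$ and $\int_{\Tp^2}\rho\,dx=0$. Your proposal never addresses these, and they are needed for the subsequent use of the matrix reformulation $\mathcal{L}(u)=0\iff\textrm{Div}(U)=0$. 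The paper disposes of them in one line: every relevant entry of $D(\varphi,\psi)$ (namely $\rho=\Delta\psi$, $v_1=\partial_{x_1x_2}\psi$, $v_2=-\partial_{x_1x_1}\psi$) is an exact spatial derivative, so its integral over the boundaryless flat torus vanishes. You should add that step.

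A secondary point: your justification that the entries of $D(\varphi,\psi)$ lie in $L^\infty$ invokes the claim that second distributional derivatives of a Lipschitz function are in $L^\infty$. That is false (consider $|x_1|$, whose second derivative is a surface measure); for $\psi\in W^{1,\infty}$ the entries of $D$ are a priori only distributions. This does not damage the identities themselves, since distributional partial derivatives always commute regardless of regularity --- which is all the paper uses --- but it does matter for reading $D(\varphi,\psi)$ as a genuine state $(\rho,v,q)\in L^\infty$, for which one needs $\psi\in W^{2,\infty}$ and $\varphi\in W^{1,\infty}$ (the hypotheses in the statement appear to have the regularities of $\varphi$ and $\psi$ interchanged; in the application, Lemma \ref{buildingblock}, $\psi_N$ is indeed taken with two bounded derivatives). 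Better to drop the false claim and simply note that the commutation identities hold distributionally for arbitrary $\varphi,\psi$.
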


\begin{proof}

In our notation we need to obtain that
\[Div(D(\varphi,\psi))=0 \]
distributionally. The claim follows since distributional partial
derivatives commute.  Namely, it holds that

\begin{description}
\item [I]  $ -\partial_{x_2x_2x_1} \psi+\partial_{x_1x_2x_2}
\psi=0$, \item [II] $\partial_{x_1x_2x_1} \psi
-\partial_{x_1x_1x_2} \psi=0$, \item [III] $(-\partial_{tx_1x_1}
\psi-\partial_{tx_2x_2} \psi +\partial_t \Delta \psi)+ (
-\partial_{x_2x_1} \varphi+\partial_{x_1x_2} \varphi)=0,$
\end{description}

as desired. Finally we observe that since the flat torus has no
boundary then
\begin{equation*}
\int_{\Tp^2} D(\varphi,\psi)=0
\end{equation*}
and thus $\mathcal{L}D(\varphi,\psi)=0$.

\end{proof}

\begin{remark}
Observe that we are solving $\Delta \psi=\rho$ and then using the
PDE to define the rest.
\end{remark}

\subsection{Plane waves with potentials}

In this section we show that for any direction $\Lambda$ with
$\rho\neq 0$ we can obtain a suitable potential. It will be more
convenient for us to work with saw-tooth  functions instead of trigonometric functions. The following
proposition is related to \cite[Proposition 3.4]{Kirchheim03}.

\begin{lemma}\label{buildingblock}
Let $U \in \Lambda$ with $\rho v \neq 0$, $0<\lambda<1,
\epsilon>0, \Omega \subset \Tp^2\times \R$. Then there exists a
sequence $u_{N}:\Omega \to \mathbb{U}$ of piecewise smooth
functions such that
\begin{itemize}
\item $\mathcal{L}(u_N)=0,$
\item $\displaystyle\sup_{(x,t) \in \Omega}
\mathrm{dist}(u_N(x,t),[-(1-\lambda)U, \lambda U])\le \epsilon,$
\item $|(x,t) \in \Omega: u_N(x,t)=(-1-\lambda)U|\ge \lambda (1-\epsilon),$
\item $|(x,t) \in \Omega: u_N(x,t)=\lambda U|\ge (1-\lambda) (1-\epsilon),$
\item $u_{N}\stackrel{*}{\rightharpoonup} 0$ in $L^\infty(\Tp^2\times\R).$
\end{itemize}
\end{lemma}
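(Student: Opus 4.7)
The plan is to realise the desired oscillating sequence explicitly through the potential $D(\varphi,\psi)$ built in the previous lemma. Since $U\in\Lambda\cap \mathbb{U}$, the matrix $U$ is singular, so there exists $\xi=(\xi_1,\xi_2,\xi_3)\in\R^3\setminus\{0\}$ with $U\xi=0$. The hypothesis $\rho\neq 0$ forces $(\xi_1,\xi_2)\neq(0,0)$ because the third column of $U$ is $(0,0,\rho)^T$, and after swapping $x_1\leftrightarrow x_2$ if necessary we may assume $\xi_1\neq 0$. Next, pick a $1$-periodic piecewise-constant function $h:\R\to\R$ equal to $\lambda$ on a subinterval of length $1-\lambda$ and to $-(1-\lambda)$ on the complementary subinterval; then $\int_0^1 h=0$ and $h$ attains only the two prescribed values. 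Let $H_1,H_2$ be $1$-periodic primitives with $H_1'=h$ and $H_2''=h$; both are bounded.

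For each large $N$ I set
\[
\psi_N(x,t)=\frac{\alpha}{N^2}\,H_2\bigl(N(x\cdot\xi)\bigr),\qquad \varphi_N(x,t)=\frac{\gamma}{N}\,H_1\bigl(N(x\cdot\xi)\bigr),
\]
where $x\cdot\xi := x_1\xi_1+x_2\xi_2+t\xi_3$, and $\alpha,\gamma\in\R$ are chosen so that $D(\varphi_N,\psi_N)(x,t)=U\cdot h\bigl(N(x\cdot\xi)\bigr)$ pointwise. A direct differentiation shows that matching the $2\times 2$ block of the potential forces the relations $\alpha\xi_2^2=v_2+\rho$, $\alpha\xi_1\xi_2=v_1$, $\alpha\xi_1^2=-v_2$ and $\alpha(\xi_1^2+\xi_2^2)=\rho$; these are simultaneously solvable precisely because the first two rows of $U\xi=0$ hold. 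Matching the $(3,1)$ and $(3,2)$ entries then determines $\gamma$ uniquely, the compatibility $\xi_1(q_2+\alpha\xi_3\xi_2)+\xi_2(q_1+\alpha\xi_3\xi_1)=0$ being exactly the third row of $U\xi=0$. The resulting function $\tilde u_N:=D(\varphi_N,\psi_N)=U\,h(N(x\cdot\xi))$ satisfies $\mathcal L\tilde u_N=0$ by the previous lemma, takes only the two values $\lambda U$ and $-(1-\lambda)U$, has level sets in $\Omega$ of measures that approach $(1-\lambda)|\Omega|$ and $\lambda|\Omega|$ with a periodic error of order $1/N$, and converges weakly-$*$ to zero on any bounded set by Riemann--Lebesgue.

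To turn $\tilde u_N$ into a function supported in $\Omega$ (so that extending by zero yields $\mathcal L u_N=0$ and the weak-$*$ convergence on $\Tp^2\times\R$), I fix a cutoff $\chi\in C_c^\infty(\Omega)$ equal to $1$ on a subdomain $\Omega_\epsilon\subset\Omega$ with $|\Omega\setminus\Omega_\epsilon|\le \tfrac{\epsilon}{2}|\Omega|$, and define $u_N:=D(\chi\varphi_N,\chi\psi_N)$; by linearity of $D$, the previous lemma still gives $\mathcal L u_N=0$. Inside $\Omega_\epsilon$ we have $u_N=\tilde u_N$, so the distance bound holds exactly and the level-set measures are $(1-\lambda)|\Omega_\epsilon|+O(1/N)$ and $\lambda|\Omega_\epsilon|+O(1/N)$. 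In the boundary layer $\Omega\setminus\Omega_\epsilon$ the additional terms that appear when differentiation meets $\chi$ are products of derivatives of $\chi$ with $\varphi_N,\psi_N$, and they are uniformly $O(1/N)$ thanks to the $N^{-1},N^{-2}$ prefactors. Choosing $N$ large enough then yields all five bullet points. The main delicate point is precisely this balance: one must first shrink to $\Omega_\epsilon$ to secure the measure conditions, and then rely on the amplitude decay in the potentials to absorb the cutoff error; the algebraic compatibility that makes $D(\varphi_N,\psi_N)=U\cdot h$ consistent is where the geometric assumption $U\in\Lambda$ is essentially used.
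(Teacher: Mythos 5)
Your proposal is correct and follows essentially the same route as the paper: a plane wave $U\,h\bigl(N(x\cdot\xi)\bigr)$ generated by the potential $D$ with a mean-zero two-valued profile, then localized by a cutoff (the paper normalizes the direction to $\xi=(-v_1/\sqrt{|v_2|},\sqrt{|v_2|},c)$ so the upper $2\times2$ block matches automatically, and localizes on a disjoint family of rescaled balls rather than a single cutoff, but these are cosmetic differences). Two harmless slips worth fixing: the coordinate swap is neither needed nor literally available ($D$ and $\mathbb{U}$ are not symmetric in $x_1,x_2$; instead observe that the hypotheses force $v_2\neq0$, whence $\xi_1=0$ would give $\xi=0$), and the compatibility condition for $\gamma$ should read $\xi_1(q_1+\alpha\xi_3\xi_1)+\xi_2(q_2+\alpha\xi_3\xi_2)=0$, which is indeed the third row of $U\xi=0$.
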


\begin{proof}

Let $S \in W^{2,\infty}(\R,\R)$ and set $$\psi_N(x_1,x_2,t)=\frac{1}{N^2}
S(N(-\frac{v_1}{\sqrt{|v_2|}}x_1+{\sqrt{|v_2|}}x_2 + c t))$$
and
$$\varphi_N(x_1,x_2,t)=\frac{1}{N} S'(N(-\frac{v_1}{\sqrt{|v_2|}}x_1+{\sqrt{|v_2|}}x_2 +
c t)).$$

Then

\[D(d\varphi_N, \psi_N)= S''(N(-\frac{v_1}{\sqrt{|v_2|}}x_1+{\sqrt{|v_2|}}x_2 +
c t))\begin{pmatrix} -v_2-\rho & v_1 &0 \\
v_1 & v_2 &0
\\ c \frac{v_1}{\sqrt{|v_2|}}-d\sqrt{|v_2|}  & - c \sqrt{|v_2|}- d\frac{v_1}{\sqrt{|v_2|}}
 & \rho \end{pmatrix}.\]

Now the matrix
\[
\begin{pmatrix}
\frac{v_1}{\sqrt{|v_2|}} & - \sqrt{|v_2|} \\
- \sqrt{|v_2|}& -\frac{v_1}{\sqrt{|v_2|}}
\end{pmatrix} \]
has determinant $-(v^2_1+v^2_2)/|v_2|\neq 0$ and hence defines a bijection of $\R^2$. Thus for
any $q_1,q_2$ we can choose $(c,d)$ so that
$$(\frac{v_1}{\sqrt{|v_2|}}-d\sqrt{|v_2|}, - c \sqrt{|v_2|}-
d\frac{v_1}{\sqrt{|v_2|}})=(q_1,q_2).$$ Therefore we have that for
any direction in $\Lambda$ with $\rho v \neq 0$ we can obtain a
suitable potential.

Next we choose an appropriate function $S$. We consider the Lipschitz 1-periodic functions $S,s:\R\rightarrow\R$ such that $S(0)=0$, $S'=s$, $s'(x)=1-\lambda-\chi_{[\lambda/2,1-\lambda/2]}(x)$ for $x\in [0,1]$. We have that $s(1/2)=0$, $s'(1/2+x)=s'(1/2-x)$ and $s(1/2+x)=s(1/2-x)$ if $|x|<1/2$.

\begin{figure}[H]
\begin{center}
\includegraphics[bb = 0 0 256 300, height=70mm,width=70mm]{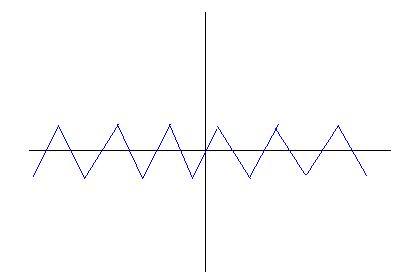}
\caption{{\it The graph of $s$.}} \vspace{5mm}
\end{center}
\end{figure}

We then localize the wave using the potential in the standard way i.e. we define a test functions $\zeta_{\epsilon'}$ such that
$$
|\zeta_{\epsilon'}|\leq 1, \qquad \zeta_{\epsilon'}=1 \textrm{ on } B_{1-\epsilon'}(0),\qquad \supp(\zeta_{\epsilon'})\subset B_1(0).
$$
Let $\Omega=B_1(0)$ and for a suitable $\epsilon'$ we define
$u_{N}^B=D(\zeta_{\epsilon'}(\varphi_N,\psi_N))$. For a general $\Omega$ consider
 disjoint balls $B_{r_k}({x}_k,t_k)$ such that
$$|\Omega\setminus\cup_k B_{r_k}({x}_k,t_k)|>1-\epsilon',$$
and finally we define
$$
u_N({x},t)=u_{N}^B(\frac{{x}-\overline{x}_k}{r_k},\frac{t-t_k}{r_k})\,\mbox{
on }\, x\in B_{r_k}({x}_k,t_k).
$$

\end{proof}

\section{The construction}

\subsection{Geometric setup}

 In this section we identify $\mathbb{U}$ with $\R\times\R^2\times \R^2$. For $A\in \mathbb{U}$ we will use coordinates $(\rho,w,z)$ with $\rho\in \R$ and $w,z\in \R^2$.  To manipulate the $\Lambda$ cone it will be helpful to introduce
the following notation. Along this section we denote by

\[S_\rho=S( (0,\frac{-\rho}{2}),\frac{|\rho|}{2}).\]
Similarly,
\[ B_\rho=B( (0,\frac{-\rho}{2}),\frac{|\rho|}{2}).\]

The set $K=(\rho,w,\rho w)\subset \mathbb{U}$ defines our pointwise constraint. The
 strategy in Section 2 requires to find
$X_0 \in (K\setminus {\tilde{K}}) \cap
\textrm{int}(\tilde{K}^{\Lambda})$. However
Proposition~\ref{boundary} shows that this is not possible.
Instead we will find   states of the form $(0,0,z) \in
\textrm{int}(\tilde{K}^{\Lambda})$. In the next section it will be
shown that this is enough to produce a weak solution.

We choose a suitable $\tilde{K} \subset K$. For that
we need to introduce the $T4$ configuration.

\begin{Def}\label{DefT4}
We say that $C\in \mathbb{U} $ is the center of   $T4$ configuration (degenerate)
if there exists $\{T_i(C)\}_{i=1}^4\in \mathbb{U}$ such that
\begin{itemize}
\item [a)]$C \in (\{T_i(C)\}_{i=1}^4)^c,$ \item [b)] $C-T_i(C) \in
\Lambda$.
\end{itemize}
We say that $A$ belongs to the $T4$ configuration if $A \in
[C,T_i(C)]$ for some $i$. In this case we also set $T_i(A)=T_i(C).$
\end{Def}

$T4$ has been the key pieces of subtle versions of convex
integration \cite{MullerSverak03,KirchheimPreiss03}. In this
degenerate form they appear for the first time in \cite{Kirchheim03}.

\begin{lemma}\label{T4}
Let $z_0 \neq (0,\frac{-1}{2}) \in B_1 $. Then there exists
$\delta=\delta(z_0)$ such that every
$A\in \mathbb{U}$ with $|A-(0,0,z_0)|\le \delta$ is the center of mass
of a $T4$ configuration in $K$. The  $T_i(A)$ are of the form
$(1,x_1,x_1),(1,x_2,x_2),(-1,y_1,-y_1),(-1,y_2,-y_2) \in K$, where
$(x_1,x_2,y_1,y_2)(A):\R^5 \to \R^2$ are differentiable
submersions. Moreover  $A \in \textrm{int} \{T_i(A)\}^c$.
\end{lemma}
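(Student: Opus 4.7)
The plan is to construct the four points $T_i(A)\in K$ explicitly by geometrizing the constraints. Write $A=(\rho_A,b,c)\in\R\times\R^2\times\R^2$ close to $A_0:=(0,0,z_0)$. Since the first component of $A-T_i$ equals $\rho_A\mp 1\neq 0$ near $A_0$, the wave cone characterization of Section 3.2 collapses the requirement $A-T_i\in\Lambda$ to a circle constraint: $x_1,x_2$ must lie on $\Gamma^-(A):=b-S_{\rho_A-1}$ and $y_1,y_2$ on $\Gamma^+(A):=b-S_{\rho_A+1}$. Near $A_0$ these are circles of radius $\approx 1/2$ centered near $(0,-1/2)$ and $(0,1/2)$ respectively, and the prescribed parametric form automatically places each $T_i$ in $K$.

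To realize $A$ as $\sum_{i=1}^4\lambda_iT_i$, take weights $\lambda_1=\lambda_2=(1+\rho_A)/4$ and $\lambda_3=\lambda_4=(1-\rho_A)/4$; the $\rho$-component then matches automatically, and the other two components reduce to fixing the midpoints
\[
X:=\frac{x_1+x_2}{2}=\frac{b+c}{1+\rho_A},\qquad Y:=\frac{y_1+y_2}{2}=\frac{b-c}{1-\rho_A},
\]
so $X\approx z_0$ and $Y\approx -z_0$ near $A_0$. The hypothesis $z_0\in B_1$ means that $X$ lies strictly inside $\Gamma^-(A)$ (and $Y$ strictly inside $\Gamma^+(A)$) for all $A$ sufficiently close to $A_0$, ensuring the existence of chords with these midpoints.

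A chord of a circle with interior midpoint $X\neq O$ is unique, namely the one perpendicular to $X-O$, so one can set explicitly
\[
x_{1,2}(A)=X\pm\sqrt{r_-^2-|X-O_-|^2}\,n_-,
\]
where $O_-$ and $r_-=(1-\rho_A)/2$ are the center and radius of $\Gamma^-(A)$ and $n_-$ is the unit vector perpendicular to $X-O_-$; an analogous formula defines $y_{1,2}(A)$ on $\Gamma^+(A)$. These formulas depend smoothly on $A$ precisely when $X\neq O_-$ and $Y\neq O_+$, both of which, evaluated at $A=A_0$, reduce to the single hypothesis $z_0\neq(0,-1/2)$ (since $O_-\approx(0,-1/2)$ with $X\approx z_0$, and symmetrically $O_+\approx(0,1/2)$ with $Y\approx -z_0$). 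Choosing $\delta$ small preserves all these non-degeneracies. The submersion property of each $x_i,y_i:\R^5\to\R^2$ is verified by a Jacobian computation: varying $c\in\R^2$ alone moves $X$ with rank $2$, which transfers through an invertible derivative to $x_i,y_i$. Finally, $A\in\mathrm{int}\{T_i(A)\}^c$ is immediate because every $T_i$ has $\rho$-component $\pm 1$ while $A$'s lies near $0$, so $A$ sits in the open complement of a finite set.

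The main technical subtlety lies in the chord-selection step and the precise role of the two hypotheses. The containment $z_0\in B_1$ is what makes chords with the prescribed midpoints exist in the first place (the midpoints must lie in the open disks), while $z_0\neq(0,-1/2)$ rules out the degenerate centered configuration where a whole pencil of diameters has the prescribed midpoint and smooth dependence on $A$ breaks down. The excluded point is exactly the common center, in the limit $A=A_0$, of the two circles $\Gamma^\pm$.
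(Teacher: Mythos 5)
Your construction is correct in outline and is a legitimate variant of the paper's argument. The paper reduces $A-T_i\in\Lambda$ to the same circle conditions $x_i\in w+S_{1-\rho}$, $y_i\in w+S_{-1-\rho}$, but it first splits $A$ along the $\Lambda$-segment $[(1,x,x),(-1,y,-y)]$ with $x=\frac{z+w}{1+\rho}$, $y=\frac{w-z}{1-\rho}$, and then takes for $[x_1,x_2]$ the \emph{diameter} through $x$, so the two weights on $x_1,x_2$ are the unequal numbers $\frac12(1\mp|x-a|/r)$; you instead prescribe the \emph{midpoint} of the chord and use equal weights. Both choices work, and both isolate the same roles for the hypotheses $z_0\in B_1$ and $z_0\neq(0,-\frac12)$. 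There are, however, two genuine problems in your write-up.

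First, the submersion argument fails as written. You claim that varying $c$ alone moves $X$ with rank $2$ and that this ``transfers through an invertible derivative'' to $x_i$. But when $\rho_A$ and $b$ are held fixed, the center $O_-$ and radius $r_-$ of $\Gamma^-(A)$ are fixed, and your formula confines $x_{1,2}(A)$ to that fixed circle; hence the map $c\mapsto x_i$ takes values in a one-dimensional set and its derivative has rank at most $1$ --- the intermediate map $X\mapsto x_i$ is never invertible. To get rank $2$ you must also vary $b$, which translates the circle. This is exactly how the paper argues: it factors $x_1=F_2\circ F_1$ with $F_2(r,a,x)=a-r\frac{x-a}{|x-a|}$, checks that $F_2$ is a submersion (already its derivative in $a$ alone is invertible), and that $F_1:(\rho,w,z)\mapsto(r,a_x,x)$ is a local diffeomorphism of $\R^5$ for small $\delta$. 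The submersion property is not cosmetic: Lemma~\ref{open} uses it to conclude that $x(\mathbb{B}_z)$ is open.

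Second, the last assertion. In this paper $\{T_i(A)\}^c$ denotes the \emph{convex hull} of the four points, not the set-theoretic complement (see Definition~\ref{DefT4}, and the proof of Lemma~\ref{open}, where $C\in\{T_{i,s}(C)\}^c$ is unpacked as $C=\sum_i\lambda_iT_{i,s}(C)$ with $\lambda_i>0$). So ``$A$ sits in the open complement of a finite set'' does not prove $A\in\textrm{int}\,\{T_i(A)\}^c$. Fortunately you have already exhibited $A=\sum_i\lambda_iT_i(A)$ with all $\lambda_i=\frac{1\pm\rho_A}{4}>0$; together with the affine independence of the four points (the paper's argument) this gives precisely the relative-interior statement intended. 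So this is a misreading of notation rather than a missing idea, but the justification you give for the final claim must be replaced.
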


\begin{proof}

First we observe that by the definition of the cone $\Lambda$

\[(\rho,w,z)-( 1,x,x) \in \Lambda \iff x-w \in S_{ 1-\rho}
\]
and
\[(\rho,w,z)-( -1,y,-y) \in \Lambda \iff y-w \in S_{ -1-\rho},
\]

 if in addition we have $$(\rho,w,z) \in
\textrm{int}(\{(1,x_1,x_1),(1,x_2,x_2),(-1,y_1,-y_1),(-1,y_2,-y_2)\}^c)$$ the
lemma is proved.

\begin{figure}[H]
\begin{center}
\includegraphics[bb = 0 0 256 300, height=80mm,width=80mm]{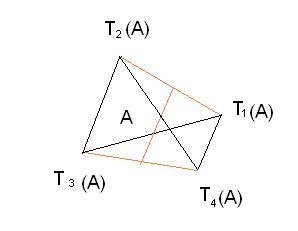}
\caption{{\it Convex hull of $T_i(A)$.}} \vspace{5mm}
\end{center}
\end{figure}

First, we notice that for each $(\rho,w,z)$  with $|\rho|<1$
there exists differentiable functions $x,y:\R \times \R^2\times \R^2
\to \R^2$ such that
\[(\rho,w,z) \in [(1,x,x), (-1,y,-y)].\]

Namely for $ t=\frac{1+\rho}{2}$ and $$x(A)=x(\rho,w,z)=\frac{z+w}{1+\rho},
\quad y(A)=y(\rho,w,z)= \frac{w-z}{1-\rho}, $$
 it holds
that   $$(\rho,w,z) = (t (1) +(1-t)(-1), t x+(1-t) (y), t x+(1-t)(-y)).$$

Now we consider the balls $ B_x=w+ B_{ 1-\rho},B_y=w+ B_{
-1-\rho}$ with centers $a_x,a_y$.

Thus, if $\delta \le \delta(z_0)$ is sufficiently small it follows
that \begin{equation}\label{xy} x=\frac{z+w}{1+\rho} \in
B_x\setminus a_x,\quad y=\frac{z-w}{1-\rho} \in B_y\setminus
a_y.\end{equation}

  In this case we can use the convexity of Euclidean balls to
show that  $x(A) \in [x_1,x_2], x_1,x_2 \in w+ S_{ 1-\rho}$ and
similarly for $y(A)$. Namely  if $|x-a|\le r$ then

\begin{equation*}x=
\frac{1}{2}(1-\frac{|x-a|}{r})\underbrace{(a-r\frac{x-a}{|x-a|})}_{x_1}+
\frac{1}{2}(1+\frac{|x-a|}{r})\underbrace{(a+r\frac{x-a}{|x-a|})}_{x_2}.\end{equation*}

The same argument provides us the mapping $y_1(A),y_2(A)$ Thus we
declare

\[\begin{aligned}&T_1(A)=(1,x_1,x_1),T_2(A)=(1,x_2,x_2), \\ &T_3(A)=(-1,y_1,-y_1),
T_4(A)=(-1,y_2,y_2).\end{aligned}\]

We have shown the existence of $\{\lambda_i\}_{i=1}^4$ such that
$0<\lambda_i<1$, $\sum_{i=1}^4 \lambda_i=1$ and
\[A=\sum_{i=1}^4 \lambda_i T_i(A). \]

 Since the $T_i(A)$ are linearly independent and $\lambda_i \neq
 0$ for $i=1,2,3,4$ we arrive to  the desired claim

 \[ A \in \textrm{int}(\cup_i T_i(A))^c. \]

We turn to the regularity of the mappings $T_i(A):\U \to \R^2$. It
is enough to consider the case $T_1$. Notice that from the convexity
argument follows
\begin{equation}\label{ar}
a_x=a(w,\rho)=w+(1-\rho)(0,\frac{-1}{2})\quad\text{and}\quad r=1-\rho .
\end{equation}
Thus, both are differentiable functions of $A$. Since $x \neq
a_x$ the same is true for $x_1,x_2, y_1$ and $y_2$. It remains to
see that they are submersions. It is enough to argue for the
function $x_1$. We write $x_1=F_1\circ F_2$ with $F_1:\R \to
\R^5$, $F_1(r,z,w) = (r_x,a_x,x)$ which are defined in
(\ref{ar},\ref{xy}) and $F_2:\R^5 \to \R^2$ is given by
$F_2(r,a,x) =a-r\frac{x-a}{|x-a|}$. For small $\delta$ $F_1$ is a
submersion and $F_2$ is always a submersion.
\end{proof}

\begin{remark}
It is instructive  to realize that by definition if $(\rho,w,z)$ belongs to  $K
\cap \Lambda$ then $z_2\le 0$ and thus $(0,0,0) \notin \textrm{int}(K
\cap \Lambda)^c$.  This shows that the above strategy never could work with $z=0$
(compare with Proposition~\ref{boundary}).
\end{remark}

  We will choose a set $\mathcal{U}$ adapted to the $T4$
  configuration.

\begin{Def}\label{firstlambda}
Let $K \subset \mathbb{U}$. The first Lambda convex hull of $K$,
$K^{1,\Lambda}$ is defined by
\[K^{1,\Lambda}=\{A \in \mathbb{U}: A=t B+(1-t)C, \quad B,C
\in K,\quad B-C \in \Lambda, t \in [0,1] \}. \]
\end{Def}

\begin{Def}\label{def} Let $z \neq (0,\frac{-1}{2})\in B_{1}$ and the
corresponding $\delta(z)$ from lemma~\ref{T4}. We denote by  $\mathbb{B}_z\in \mathbb{U}$
 the euclidean ball centered in $(0,0,z)$ and with radius $\delta(z).$
Furthermore  we declare
\begin{equation*}
\mathcal{U}_{z}=\cup_{i=1}^4\{ \mathbb{B}_z \cup
T_{i}(\mathbb{B}_z)\}^{1,\Lambda}\quad\text{and}\quad K_{z}=\cup_{i=1}^4\{
T_{i}(\mathbb{B}_z)\}\in K.
\end{equation*}
\end{Def}

\begin{lemma}\label{open}
The set $\mathcal{U}_{z}\setminus K $ is open in $\mathbb{U}$.
\end{lemma}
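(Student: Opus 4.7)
The plan is to reduce the openness to a local submersion argument together with the observation that $K=\{(\rho,w,\rho w)\}$ is closed in $\mathbb{U}$ (being the graph of a continuous map). Since $\mathbb{U}\setminus K$ is then open, it suffices to show that every $A\in\mathcal{U}_z\setminus K$ admits an open $\mathbb{U}$-neighborhood $V\subset\mathcal{U}_z$; the intersection $V\cap(\mathbb{U}\setminus K)$ is then the required neighborhood in $\mathcal{U}_z\setminus K$.

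\textbf{Case reduction.} Fix $A\in\mathcal{U}_z\setminus K$. By Definition~\ref{def} there exist $i\in\{1,2,3,4\}$, endpoints $P,Q\in\mathbb{B}_z\cup T_i(\mathbb{B}_z)$ with $P-Q\in\Lambda$, and $\lambda\in[0,1]$ such that $A=(1-\lambda)P+\lambda Q$. If $P,Q\in\mathbb{B}_z$ then $A$ lies in the open ball $\mathbb{B}_z$ by convexity. If $P,Q\in T_i(\mathbb{B}_z)$ the explicit form $(1,x,x)$ or $(-1,y,-y)$ of Lemma~\ref{T4} makes every convex combination lie in $K$, contradicting $A\notin K$. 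Thus I may assume $P\in\mathbb{B}_z$, $Q\in T_i(\mathbb{B}_z)$, and discarding $\lambda=0$ (giving $A=P\in\mathbb{B}_z$) and $\lambda=1$ (giving $A=Q\in K$) leaves $\lambda\in(0,1)$.

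\textbf{Submersion argument.} Parametrise $T_i(\mathbb{B}_z)$ by $\R^2$ through the affine maps $\tau_i$ (with $\tau_i(x)=(1,x,x)$ for $i=1,2$ and $\tau_i(y)=(-1,y,-y)$ for $i=3,4$) and let $p(\rho,w,z)=-\rho(|w|^2+\rho w_2)$, so that $\Lambda=\{p=0\}$. Consider
\[\Phi(B,v,t)=(1-t)B+t\,\tau_i(v)\]
on the constraint surface $\{p(B-\tau_i(v))=0\}\subset\mathbb{U}\times\R^2\times(0,1)$ near $(P,v_0,\lambda)$, where $\tau_i(v_0)=Q$. A direct differentiation, using Euler's identity $\nabla p(X)\cdot X=3p(X)$ (from degree-$3$ homogeneity of $p$ in $(\rho,w)$) to see that the $\delta t$ contribution $(Q-P)\delta t$ already lies in $\ker\nabla p(P-Q)$, gives that the image of the constrained differential $d\Phi$ equals the subspace $D\tau_i(\R^2)+\ker\nabla p(P-Q)$. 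Consequently, at a smooth point of $\Lambda$, surjectivity of $d\Phi$ reduces to the transversality $D\tau_i(\R^2)\not\subset\ker\nabla p(P-Q)$.

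\textbf{Transversality, the main obstacle.} This is the only nonroutine step, but it is forced by the geometry. Write $P-Q=(\rho_*,w_*,z_*)$: the first coordinate $\rho_*\in\{\rho_P-1,\rho_P+1\}$ is nonzero because $|\rho_P|\le\delta<1$, so $P-Q$ is a smooth point of $\Lambda$ and $\nabla p(P-Q)\neq 0$. A direct computation gives $\nabla p=(-|w|^2-2\rho w_2,\,-2\rho w_1,\,-\rho(2w_2+\rho),\,0,\,0)$, vanishing in the $z$-components, while $D\tau_i(\R^2)=\{(0,a,\pm a):a\in\R^2\}$. An inclusion $D\tau_i(\R^2)\subset\ker\nabla p(P-Q)$ would therefore force $w_{*,1}=0$ and $w_{*,2}=-\rho_*/2$; substituting into the $\Lambda$-equation $|w_*|^2+\rho_* w_{*,2}=0$ yields $-\rho_*^2/4=0$, contradicting $\rho_*\neq 0$. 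Hence $d\Phi$ is surjective, $\Phi$ is locally a submersion at $(P,v_0,\lambda)$, and its image covers an open $\mathbb{U}$-neighborhood of $A$ lying entirely in $\mathcal{U}_z$; intersecting with $\mathbb{U}\setminus K$ completes the argument.
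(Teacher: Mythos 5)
Your proof is correct, and it takes a genuinely different route from the paper's. The paper argues quantitatively and by hand: given $A_t=tA+(1-t)X_A$ and a perturbation $C$ with $|C|\le\epsilon$, it explicitly rebuilds a $\Lambda$-segment through $A_t+C$ by keeping the unit vector $\xi$ in the sphere parametrization $x=w_t+(1-\rho_t)[(0,-\tfrac12)+\xi]$ fixed while shifting the center and radius, then estimates $|x-x_A|\le 3\epsilon$ and $|A_C-A|\le(3+\tfrac{4}{t})\epsilon$ to keep the new endpoints inside $T_i(\mathbb{B}_z)$ and $\mathbb{B}_z$ respectively. You instead prove local surjectivity of the segment parametrization $\Phi$ via the submersion theorem on the constraint variety $\{p(B-\tau_i(v))=0\}$, with the Euler identity disposing of the $\delta t$-direction and the transversality $D\tau_i(\R^2)\not\subset\ker\nabla p(P-Q)$ carrying the whole weight; your computation showing that failure of transversality would force $w_*$ to be the center of $S_{\rho_*}$ (impossible since the center does not lie on the sphere when $\rho_*\neq0$) is exactly the geometric fact underlying the paper's condition \eqref{xy}, so the two arguments ultimately rest on the same obstruction. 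What each buys: yours is cleaner, non-quantitative, and isolates the precise transversality condition; the paper's is constructive with an explicit admissible $\epsilon$, in keeping with its self-contained quantitative style. Two small points you should make explicit: (i) the condition ``$\tau_i(v)\in T_i(\mathbb{B}_z)$'' is open in $v$ because $x_j(\mathbb{B}_z)$ is open in $\R^2$, which is where the submersion statement of Lemma~\ref{T4} is actually consumed (the paper uses it in the same way); and (ii) in your case reduction the configuration $P\in T_i(\mathbb{B}_z)$, $Q\in\mathbb{B}_z$ is handled by symmetry. Neither is a gap.
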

\begin{proof}

Let $C \in \mathbb{U}$ with $|C|\le \epsilon$ and $A_t\in \mathcal{U}_{z}\setminus K $. Our task is to find an $\epsilon$ so that $C_t= A_t + C\in \mathcal{U}_{z}\setminus K $. By definition  $A_t=t A
+(1-t)X_A$, where $A \in \mathbb{B}_z$, $0<t<1$ and $X_A\in T_{i}(\mathbb{B}_z)$ for some $i=1,2,3,4$. Moreover it holds that $A-X_A \in \Lambda$.

 It will be enough to show  that if $\epsilon$ is small enough, then there exists
  $X \in T_i(\mathbb{B}_z)$ such
that

\[C_t-X \in \Lambda\quad\text{and}\quad X+\frac{1}{t}(C_t-X)
\in \mathbb{B}_z. \]

We use coordinates
$A=(\rho,w,z)$, $X_A=(1,x_A,x_A)$, $A_t=(\rho_t,w_t,z_t),$ $C=(\rho_C,w_C,z_C)$.
Since $X_A - A \in \Lambda$ it holds that $X_A - A_t \in \Lambda$ as
well.   Thus, by the definition of $\Lambda$,
 $x_A-w_t \in S_{1-\rho_t}$. Therefore there exists $\xi$ with  $|\xi|=1$ such that
\[X_A =w_t + (1-\rho_t)[(0,-\frac{1}{2}) + \xi].\]
 Next recall that
 $C_t-(1,x,x ) \in \Lambda$ if $x-w_t-w_C \in
 S_{1-(\rho_t+\rho_C)}$. Set $x=w_t+w_C +(1-\rho_t-\rho_C)((0,\frac{-1}{2})+\xi)$ and
 notice that
 \begin{equation*}
|x-x_A|\le |w_C|+2|\rho_C| \le 3|C|\le 3 \epsilon.
 \end{equation*}

The function $x:A \to x(A)$ is a submersion by Lemma \ref{T4} and thus
$x(\mathbb{B}_z)$ is an open subset of $\R^2$. Hence, by choosing
$6\epsilon \le \textrm{dist}(x_A,\partial x( \mathbb{B}_z))$,  we
obtain that $(1,x,x) \in T_i(\mathbb{B}_z)$.

Now since $A=X_A + \frac{1}{t}(A_t-X_A)$ we have
\begin{eqnarray*}
|X+\frac{1}{t}(C_t-X)-A|&=&|X-T_i(A)|+
\frac{1}{t}|C_t-A_t|+|X-T_i(A)|\\&\le& (3+\frac{4}{t})\epsilon.
\end{eqnarray*}

Then if $\epsilon \le  \frac{1}{8} t\cdot\textrm{dist}(A,\partial
\mathbb{B}_z)$ it follows that $A_C=X+\frac{1}{t}(C_t-X) \in
\mathbb{B}_z$ and  \[C_t=tA_C+(1-t)X \in \mathcal{U}_{z}.\] A
final choice $ 8\epsilon \le \min\{t\cdot\textrm{dist}(A,\partial
\mathbb{B}_z),\textrm{dist}(x_A,\partial x( \mathbb{B}_z))\}$ yields the claim.
\end{proof}

\section{From geometric structure to weak solutions}

There are many ways to pass from  solutions in $\U$ to solutions
in $K$ depending on the geometry of the sets. We have followed to
present an approach which in some sense axiomatize the arguments
in \cite{DeLellisSzekelyhidi1}. However it is by no means the only
argument and for example one can verify that our sets $K$ and $\U$
verify the conditions in \cite[Proposition 4.42]{Kirchheim03}. The
argument is easier to explain via the following definition which
is related to the notion of stability near $K$ given in
\cite[Definition 3.15]{Kirchheim03}

\begin{Def}[Analytic Perturbation Property, APP]
The sets $K \subset \mathcal{U}$ have the analytic perturbation
property if for each $A \in \mathcal{U}$ and for every domain
$\Omega \subset \R^2_x \times \R$
there exists a sequence $Z_j$ of piecewise smooth function such
that

\begin{itemize}
\item [i)] $\mathcal{L}(Z_j)=0$.
 \item [ii)] $Z_j$ is supported in $\Omega$.
\item [iii)]$A+Z_j \in \mathcal{U}$ a.e.
 \item [iv)] $ Z_j \rightharpoonup 0$.
 \item [v)] There exists $c>0$ such that
  $\int_{\Omega}|Z_j|\geq c \cdot\textrm{dist}_K(A)|\Omega|.$
 \end{itemize}
\end{Def}

Next we prove that our sets enjoy the analytic perturbation
property. The reader that is familiar with convex integration will realize
that in practice our arguments are related to the concept  of  in
approximation \cite{MullerSverak03,Gromov} for $\Lambda$ convexity
(based on degenerated $T4$ configurations).
\begin{lemma}\label{KUAPL}
The sets $K_z$ and $\mathcal{U}_z$ have the APP property.
\end{lemma}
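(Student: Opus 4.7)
The strategy is to produce, for each $A \in \mathcal{U}_z$, a $\Lambda$-direction along which Lemma \ref{buildingblock} can be applied, keeping the resulting oscillation inside $\mathcal{U}_z$ and of magnitude comparable to $\textrm{dist}_{K_z}(A)$. If $A \in K_z$ then $\textrm{dist}_{K_z}(A) = 0$ and the choice $Z_j \equiv 0$ trivially meets all five requirements, so from now on assume $A \in \mathcal{U}_z \setminus K_z$; recall that this set is open by Lemma \ref{open}.

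By Definition \ref{firstlambda} one can write $A = tB + (1-t)C$ with $t \in (0,1)$, $B,C \in \mathbb{B}_z \cup T_i(\mathbb{B}_z)$ for some index $i$, and $B-C \in \Lambda$. If one of the endpoints, say $C$, lies in $T_i(\mathbb{B}_z) \subset K_z$, the closed segment $[B,C]$ touches $K_z$, so we pass to a slightly shrunken sub-segment by setting $B^\star = (1-\eta)B + \eta C$ and $C^\star = \eta B + (1-\eta)C$ for a small $\eta > 0$. Both new endpoints lie in the relative interior of $[B,C]$ and hence in the open set $\mathcal{U}_z \setminus K_z$; the difference $B^\star - C^\star = (1-2\eta)(B-C)$ remains in $\Lambda$; and a short calculation shows $A = \lambda B^\star + (1-\lambda)C^\star$ with $\lambda = (t-\eta)/(1-2\eta) \in (0,1)$ provided $\eta < \min(t,1-t)$. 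We then apply Lemma \ref{buildingblock} with direction $U = C^\star - B^\star$, fraction $\lambda$, and accuracy $\epsilon$ chosen smaller than the distance from the compact segment $[B^\star,C^\star]$ to the complement of $\mathcal{U}_z \setminus K_z$, and set $Z_j := u_N$. Properties (i), (ii) and (iv) are immediate from the building block, while (iii) holds because $A+Z_j$ remains within $\epsilon$ of $[B^\star,C^\star] \subset \mathcal{U}_z \setminus K_z$.

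For the quantitative property (v), the plateau structure of the building block yields
\[
\int_\Omega |Z_j|\,dx\,dt \;\geq\; 2\lambda(1-\lambda)(1-2\eta)\,|B-C|\,|\Omega|\,(1 - O(\epsilon)),
\]
while $\textrm{dist}_{K_z}(A) \leq |A-C| = t|B-C|$. Hence the desired bound reduces to showing that the ratio $\lambda(1-\lambda)/t$ admits a uniform positive lower bound. This is automatic whenever $t$ is bounded away from $1$; the delicate regime is $t \to 1$, in which $A$ is nearly equal to $B \in \mathbb{B}_z$. The main obstacle is to handle this case by exploiting that $\mathbb{B}_z$ is an open ball in $\mathbb{U}$: one then picks instead a symmetric decomposition $A = \tfrac12(A+V) + \tfrac12(A-V)$ with $V \in \Lambda$ and $|V|$ proportional to $\textrm{dist}(A,\partial\mathbb{B}_z)$, and verifies that in this competing regime $\textrm{dist}(A,\partial\mathbb{B}_z)$ is comparable to $\textrm{dist}_{K_z}(A)$. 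Stitching the two sub-cases together produces the universal constant $c > 0$ required by (v).
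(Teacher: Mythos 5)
Your single--lamination scheme is essentially the paper's argument in the regime where the weight on the $K_z$--endpoint is bounded below (the paper's case $s<\tfrac78$), and the shrinking trick, the verification of (i)--(iv), and the reduction of (v) to a lower bound on $\lambda(1-\lambda)/t$ are all fine there. The gap is in the ``delicate regime'' $t\to 1$, i.e.\ for $A$ in or near $\mathbb{B}_z$, and the fix you propose is based on a false claim. For such $A$ one has $\textrm{dist}_{K_z}(A)\ge 1-\delta$, because every point of $K_z\subset\cup_i T_i(\mathbb{B}_z)$ has $|\rho|=1$ while $A$ has $|\rho|\le\delta$; on the other hand $\textrm{dist}(A,\partial\mathbb{B}_z)\le\delta$ and tends to $0$ as $A$ approaches $\partial\mathbb{B}_z$. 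So these two quantities are \emph{not} comparable, and your symmetric oscillation $A\pm V$ with $|V|\sim\textrm{dist}(A,\partial\mathbb{B}_z)$ gives $\int|Z_j|\lesssim \textrm{dist}(A,\partial\mathbb{B}_z)\,|\Omega|$, which is arbitrarily small compared with $\textrm{dist}_{K_z}(A)|\Omega|\approx|\Omega|$; property (v) fails for any uniform $c>0$. The obstruction is structural: the only $\Lambda$--segments of $\mathcal{U}_z$ through a point of $\mathbb{B}_z$ that have length of order $1$ are the segments $[A,T_i(A)]$, and they emanate from $A$ on one side only (the continuation past $A$ must stay in $\mathbb{B}_z$, hence has length at most $2\delta$). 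Hence no single application of Lemma~\ref{buildingblock} at such an $A$ can stay in $\mathcal{U}_z$ and produce $\int|Z_j|\gtrsim \textrm{dist}_{K_z}(A)|\Omega|$ with a constant independent of the position of $A$ in $\mathbb{B}_z$. This is exactly the phenomenon behind Proposition~\ref{boundary} ($K\subset\partial K^{\Lambda}$) and the reason the paper introduces $T4$ configurations at all.

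What the paper does instead for $C\in\mathbb{B}_z$ is an \emph{infinite iteration} around the (slightly perturbed, non--degenerate) $T4$ configuration: one oscillates between $C_{i-1}^\epsilon$ and $T_i^\epsilon(C)$, landing at step $j$ on a point $T_i^\epsilon(C)$ at distance $\ge\tfrac14\textrm{dist}_{K_z}(C)$ from $C$ on a fraction $\varepsilon(1-\varepsilon)^j$ of the domain, and keeping a fraction $(1-\varepsilon)^{j+1}$ available for the next step; summing the geometric series yields a set of measure $\ge\tfrac14|\Omega|$ on which $|Z_j|\ge\tfrac14\textrm{dist}_{K_z}(C)$, which is property ($\acute{\text{v}}$). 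The general point $A=sC+(1-s)X_C$ with $s\ge\tfrac78$ is then handled by first laminating onto $C\in\mathbb{B}_z$ on at least $\tfrac78$ of the measure and running the iteration there. Your proof cannot be completed without reintroducing some version of this accumulation step; in particular the sentence claiming comparability of $\textrm{dist}(A,\partial\mathbb{B}_z)$ and $\textrm{dist}_{K_z}(A)$ must be abandoned.
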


\begin{proof} We will obtain   v) as a consequence of
 \begin{itemize}
\item [$\acute{\text{v}}$)]There exists $c_1,c_0>0$ such that
  \[ |\{(x,t) \in \Omega: |Z_j(x,t)|\ge c_1\textrm{dist}_K(A)\}|\ge
  c_0 |\Omega|.\]
   \end{itemize}
  Let $C\in\mathbb{B}_z$. By lemma~\ref{T4} is the center
of a $T4$ configuration $ \{T_i(C)\}_{i=1}^4$. Now observe
that for $s$ small enough  $C$ belongs also to a $T4$
configuration supported in $T_{i,s}(C)=sC+(1-s)T_i(C) \in
\textrm{int} K_{z,\delta}$. Namely, since  $T_{i,s}(C) \to
{T_i}(C)$ as $s \to 0$ it follows that $C  \in
\textrm{int}(\{T_i(C)\})^c$ implies that $C \in \{T_{i,s}(C)\}^c$. The other properties are obvious.

Let
$T_{i,s}(C)$ be as above. First notice that since $|C-T_i(C)|\ge
\textrm{dist}(C,K_z)$ by continuity we can choose $s_0$ such that for $s>s_0$ it holds that

\begin{equation}\label{dist} 2|C-T_{i,s}(C)|\ge \textrm{dist}(A,K_z).
\end{equation}

Since $C \in \{T_{i,s}(C)\}^c$, there exist ${\lambda_i}>0$ with
$\sum \lambda_i=1$ such that  $C=\sum_i \lambda_i T_{i,s}(C)$. Since the $T4$ configuration is degenerate, if we set
$C_i^\epsilon=C+ \epsilon \sum_{j=1}^i({T_{i,s}}(C)-C)$ and
$T_i^\epsilon(C)=T_{i,s}(C)+ C_i^\epsilon$ it follows that
\[C_{i+1}^\epsilon= (1-\frac{\epsilon}{1+\epsilon})
C_{i}^\epsilon+ \frac{\epsilon}{1+\epsilon} T_{i+1}^\epsilon(C),\]
with $T_{i+1}^\epsilon(C)-C_{i}^\epsilon \in \Lambda$, i.e
$T_i^\epsilon$ is a non degenerate $T4$ (We drop the dependence
of $s$ for simplicity).

Fix a sequence $\eta_j$ such that $\Pi_{r=1}^\infty (1-\eta_r)\ge
\frac{1}{2}$ and $\varepsilon=\frac{\epsilon}{1+\epsilon}>0$. We will obtain the sequence
$\{Z_j\}_{j=0}^\infty$ recursively. We claim that given $Z_j$
there exists $Z_{j+1}$ such that
\begin{itemize}
\item [i)]$|\{(x,t):| Z_{j+1}(x)|\ge \frac{1}{4}
\textrm{dist}_K(C)| \}| \ge |\{(x,t):| Z_{j}(x)|\ge \frac14
\textrm{dist}_K(C)| \}|+(1-\varepsilon)^j \varepsilon \frac{1}{2}.$
\item [ii)] There exists $i=i(j)$ such that
\[ \Pi_{r=1}^j (1-\eta_r)(1-\varepsilon)^j\le |\{(x,t): C+Z_{j+1}(x)= C_i^\epsilon
\}|.\] \item [iii)]
$Z_j \rightharpoonup 0.$
\item [iv)] $C+Z_j \in \mathcal{U}_z.$
\end{itemize}

The proof follows by induction with $Z_0=0$. We start by considering
the open set $\Omega_j=\{(x,t) \in \Omega:
C+Z_{j}=C_i^{\epsilon}\}$. Since $C_{i}^\epsilon=
(1-\frac{\epsilon}{1+\epsilon}) C_{i-1}^\epsilon+
\frac{\epsilon}{1+\epsilon} T_{i}^\epsilon(C)$ we apply  the
Lemma~\ref{buildingblock} in $\Omega_j$ with
$\eta<\eta_{j+1}<\eta_0$ to obtain a new sequence $\{Z_{j,k}\}$.

\begin{itemize}
\item $Z_{j,k} \rightharpoonup 0.$ \item $|\{(x,t) \in \Omega_j:
Z_{j,k}+C_i^\epsilon=T_i^\epsilon({C})\}|\ge
(\frac{\epsilon}{1+\epsilon})(1-\eta_{j+1})|\Omega_j|.$ \item
$|\{(x,t) \in \Omega_j: Z_{j,k}+C_i^\epsilon={C}_{i-1}^\epsilon
\}|\ge (\frac{1}{1+\epsilon})(1-\eta_{j+1}) |\Omega_j|.$ \item
$\displaystyle\sup_{({x},t) \in \Omega_j}
\mathrm{dist}(Z_{j,k}({x},t)+C_i^\epsilon,[T_i^\epsilon({C}),{C}_{i-1}^\epsilon])\le
(1-\eta_j).$
\end{itemize}

Now set $Z_{j+1,k}=Z_{j}+  Z_{j,k} $ and
$\varepsilon=\frac{\epsilon}{1+\epsilon}$. Let us verify
properties $i)$ and $ii)$ . Notice that if   $(x,t) \in \Omega_j$
and $ Z_{j,k}(x)+C_i^\epsilon=T_i^\epsilon(C) $, then
$Z_{j+1,k}(x,t)= T_i^\epsilon({C})-C$ and thus by (\ref{dist}), for
small $\epsilon$, we get

\[|Z_{j+1,k}(x,t)| \ge \frac{1}{4} \textrm{dist}_K(C).\]

For $i)$ we have

\[\begin{aligned}
|\{(x,t):| Z_{j+1,k}(x,t)|\ge \textrm{dist}_K(C) \}| &\ge |\{(x,t)
\in \Omega \setminus \Omega_j:| Z_{j}(x,t)|\ge \textrm{dist}_K(C)
\}|\\&+ |\{(x,t)\in \Omega_j:
Z_{j,k}(x,t)+C_i^\epsilon=T_i^\epsilon(C)\}|
\\&\ge
|\{(x,t) \in \Omega \setminus \Omega_j:| Z_{j}(x,t)|\ge \textrm{dist}_K(C) \}|\\
&+\varepsilon (1-\eta_{j+1})|\Omega_j| \\ &\ge |\{(x,t) \in \Omega
\setminus \Omega_j:| Z_{j}(x,t)|\ge \textrm{dist}_K(C) \}|\\ &+
\varepsilon \Pi_{r=1}^{j+1}(1-\eta_r)(1-\varepsilon)^j
|\Omega|\\\ge \frac{1}{2} \varepsilon (1-\varepsilon)^j|\Omega|.
\end{aligned}\]

As for $ii)$, we have

\[ \begin{aligned}&|\{(x,t) \in \Omega: Z_{j+1,k}(x,t)+C ={C}_{i-1}^\epsilon  \}| \\
&\ge |\{(x,t) \in \Omega_j: Z_j(x,t)+Z_{j,k}(x,t)+C=
{C}_{i-1}^\epsilon| \ge (1-\varepsilon) (1-\eta_{j+1}) |\Omega_j|
\\& \ge (1-\varepsilon)^{j+1} \Pi_{r=1}^{j+1}
(1-\eta_{j+1})|\Omega|.\end{aligned}
\]

Next, property iv) follows because  the segments $[T_i^\epsilon({C}),{C}_{i-1}^\epsilon]$ are compact subsets
of $\mathcal{U}_z$ and by a diagonal argument we choose a subsequence $Z_{j,k(j)}$
such that

\[  Z_j \rightharpoonup 0. \]

It just remains to show that ${Z_j}$ satisfies property $\acute{\text{v}}$
 in the definition of analytic perturbation property (APP). Indeed by
 property i) in the definition of $Z_j$ yields

\[\begin{aligned} |\{(x,t) \in \Omega: |Z_{j+1}(x,t)|\ge
\frac{1}{4}\textrm{\textrm{dist}}_K(C)\}|&=
|\Omega|\sum_{k=1}^j (1-\epsilon)^j \epsilon \frac{1}{2}= |\Omega|\frac{1}{2}
(1-\epsilon^{j+1})\\ &\ge \frac{1}{4}|\Omega| \end{aligned}\]
for $j$ large enough. Thus for  $C \in \mathbb{B}_z$ (APP)
holds.

Let $A \in \mathcal{U}_z\setminus  \mathbb{B}_z$. By
definition  $A= sC+(1-s)X_C$ with $C\in \mathbb{B}_z$, $X_C\in T_i(\mathbb{B}_z)$, $0<s<1$ and  $ C- X_C\in\Lambda$.
 Consider the sequence $\{Z_j\}$
given by
 Lemma~\ref{buildingblock}. If $s<\frac{7}{8}$ $\{Z_j\}$ is the sequence required by the
 (APP) property. Hence we may assume that $s\ge \frac{7}{8}$, obtaining
that $|\{x \in \Omega: A+Z_j(x)=C\}|\ge \frac{7}{8}$. Let
$\Omega_j=\{(x,t) \in \Omega: A+Z_j(x,t)=C\}$. Since $C \in
\mathbb{B}_z$ we can use the argument above for the domain
$\Omega_j$. We are provided with  a sequence $\{Z_k\}$ such that
$Z_k \rightharpoonup
 0,Z_k+C \in \mathcal{U}_z$ and

\[
|\{(x,t) \in \Omega_j: |Z_k(x,t)|\ge
\frac{1}{4}\textrm{dist}_K(C)\}|\ge
  \frac{1}{4} |\Omega_j| \ge \frac{7}{32}|\Omega|. \]

A proper subsequence $W_j=Z_j+Z_{k(j)}$ satisfies all the desired
properties in the definition of (APP). As a matter of fact just
property $\acute{\text{v}}$ needs verification. Notice that for $C \in
\mathbb{B}_z$ and $X \in T_i(\mathbb{B}_z)$ it holds that
\[1-\delta<|C-X|\le 1 + \delta+\max_{C \in
\mathbb{B}_z}|T_i(C)| =M(\delta). \]

Thus $\textrm{dist}_K(C)> 1 - \delta$. Next we notice  that

\begin{equation}\label{A}
\textrm{dist}_K(A)\le |A-X_C|\le s |C-X_C|\le
M(\delta) .\end{equation}

 Let  $(x,t) \in \Omega_j$ be such
that $|Z_k(x,t)|\ge \frac{1}{4}\textrm{dist}_K(C)$. Then
\begin{equation}\label{B}\begin{aligned}
|W_j(x,t)| &\ge \frac{1}{4}\textrm{dist}_K(C)-|C-A|\\&\ge
  \frac{1}{4}(1-\delta)-(1-s)|C-X_C|\\&\ge \frac{1}{8}(1-\delta).
\end{aligned}
\end{equation}

Hence if we declare $c_1(\delta)=\frac{1-\delta}{8M(\delta)}$, and
we put together (\ref{A}) and (\ref{B}) we obtain

\[c_1(\delta)\textrm{dist}_K(A)\le |W_j(x,t)|\]

for every $(x,t) \in \Omega_j$ with $|Z_k(x,t)|\ge
\frac{1}{4}\textrm{dist}_K(C)$. We arrive to the estimate

\[|\{(x,t) \in \Omega:|W_j(x,t)|\ge c_1(\delta)\textrm{dist}_K(A)|\}\ge
\frac{7}{32}|\Omega|.\]

Finally (v')  holds with $c_1(\delta)=c_1$ and $c_0=\frac{7}{32}$.

\end{proof}

Next we show that if a set has a perturbation property there are
many  solutions for our inclusion. We formulate the existence
theorem in the following way:

\begin{theorem}\label{existence}
Let $K, \mathcal{U}$ be two sets that satisfy the Analytic
Perturbation Property and $\Omega \subset \Tp^2 \times \R$ a
bounded open set and $A \in \mathcal{U}$. Then there are infinitely
many solutions $U$ such that

\begin{enumerate}
\item $\mathcal{L}(U)=0$ \item $U(x,t) \in K$ a.e $(x,t) \in
\Omega$ \item $U(x,t)=A$ for $(x,t) \in \Tp^2 \times \R \setminus
\Omega$
\end{enumerate}
\end{theorem}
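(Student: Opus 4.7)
I would follow the Baire category strategy introduced for Euler by De Lellis and Sz\'ekelyhidi, exploiting the APP established in Lemma~\ref{KUAPL}. First, fix a large ball $B_M\subset\mathbb{U}$ containing $\mathcal{U}$ and introduce the space of subsolutions
\[
X_0=\{U\in L^\infty(\Tp^2\times\R;\mathbb{U}):\ \mathcal{L}(U)=0,\ U(x,t)\in\mathcal{U}\text{ a.e.\ on }\Omega,\ U\equiv A\text{ on }\Tp^2\times\R\setminus\Omega\},
\]
equipped with the weak-$*$ topology, which is metrizable on bounded sets. Let $X$ be the weak-$*$ closure of $X_0$; since $\mathcal{L}$ is linear with constant coefficients it is stable under weak-$*$ limits, so elements of $X$ still satisfy $\mathcal{L}(U)=0$ and coincide with $A$ off $\Omega$. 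Thus $(X,d)$ is a nonempty complete metric space (it contains the constant state $U\equiv A$).

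The second step is to introduce the functional $J(U)=\int_{\Omega}|U|^{2}\,dx\,dt$, which is bounded and weak-$*$ lower semicontinuous, hence of Baire class one as the pointwise limit of the strongly continuous functionals $U\mapsto\int_\Omega|U*\varphi_\delta|^2$. Consequently, the set $\mathcal{C}\subset X$ of points of continuity of $J$ is a dense $G_\delta$, hence residual, by the classical theorem on discontinuities of Baire-one functions.

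The decisive step is to show that every $U\in\mathcal{C}$ already lies in $K$ a.e.\ on $\Omega$. Arguing by contradiction, if $\mathrm{dist}(U,K)>0$ on a set of positive measure, a Lebesgue-density and covering argument produces finitely many pairwise disjoint balls $B_{k}\subset\Omega$ on which $U$ is $L^{\infty}$-close to constant states $A_{k}\in\mathcal{U}$ with $\mathrm{dist}(A_{k},K)\ge c>0$. Applying the APP to each $A_{k}$ on $B_{k}$ yields perturbations $Z_{j,k}$ supported in $B_{k}$ with $A_{k}+Z_{j,k}\in\mathcal{U}$, $Z_{j,k}\rightharpoonup 0$, and (by property $\acute{\text{v}}$ extracted in Lemma~\ref{KUAPL}) a set of definite measure where $|Z_{j,k}|\ge c_{1}c$. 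Pasting these together gives $Z_{j}\rightharpoonup 0$ with $U+Z_{j}\in X_{0}$ and a uniform lower bound $\int_{\Omega}|Z_{j}|^{2}\ge\gamma>0$. By weak-$*$ convergence,
\[
\liminf_{j\to\infty}J(U+Z_{j})\ge J(U)+\gamma,
\]
contradicting continuity of $J$ at $U$. Hence $\mathcal{C}\subset\{U\in K\text{ a.e.\ on }\Omega\}$, and since the APP allows us to strictly perturb any subsolution, $X$ has no isolated points, so the residual set $\mathcal{C}$ is uncountable, producing infinitely many distinct solutions.

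The main obstacle is the pasting in step three: the APP is formulated at a single constant state $A\in\mathcal{U}$, while a typical point of continuity $U$ is only weak-$*$ approachable by genuine subsolutions and is merely close to constants on small sub-balls. The openness of $\mathcal{U}\setminus K$ proved in Lemma~\ref{open} is the essential ingredient that guarantees the pieced-together perturbation $U+Z_{j}$ still takes values in $\mathcal{U}$, so that it stays in $X_{0}$ and the Baire machinery applies.
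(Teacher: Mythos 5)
Your overall strategy (Baire category on the weak-$*$ closure $X$ of a space of subsolutions, a Baire-one functional whose continuity points must lie in $K$, and the APP as the engine) is the same as the paper's; the only cosmetic difference is that the paper uses the identity map from $(X,\mathrm{weak}*)$ to $(X,L^1)$ as the Baire-one map, where you use the energy $J(U)=\int_\Omega|U|^2$. However, there is a genuine gap in your decisive step. You apply the perturbation directly at the continuity point $U\in\mathcal{C}$: you cover $\{\mathrm{dist}(U,K)>0\}$ by balls on which $U$ is ``$L^\infty$-close to constants $A_k\in\mathcal{U}$'' and then paste APP perturbations to get $U+Z_j\in X_0$. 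But $U$ is only a weak-$*$ limit of subsolutions: it is not piecewise smooth, Lebesgue density gives at best $L^1$-closeness to constants on small balls (not $L^\infty$-closeness), and a priori $U$ need not even take values in $\mathcal{U}$ pointwise, only in its weak-$*$ closed convex hull. So $U+Z_j$ need not belong to $X_0$ (nor to $X$), and the contradiction $\liminf J(U+Z_j)\ge J(U)+\gamma$ is drawn for a sequence that may leave the space on which $J$'s continuity at $U$ is asserted. Invoking the openness of $\mathcal{U}_z\setminus K$ (Lemma~\ref{open}) does not repair this, because openness only helps once you already have uniform closeness to a state of $\mathcal{U}$.

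The paper avoids this by building piecewise smoothness into the definition of $X_0$ and proving a Perturbation Lemma \emph{at the level of $X_0$}: for $U_0\in X_0$ one chooses finitely many disjoint balls on which the piecewise smooth $U_0$ is genuinely uniformly close to its value at the center (this is where regularity and the ``small continuity argument'' via Lemma~\ref{open} enter), applies the APP there, and obtains $U_k\in X_0$, $U_k\rightharpoonup U_0$, with $\int|U_k-U_0|\ge C\int\mathrm{dist}_K(U_0)$. The conclusion is then transferred to a continuity point $U$ not by perturbing $U$ itself but by perturbing an approximating sequence $U_j\in X_0$, $U_j\rightharpoonup U$: continuity of the identity at $U$ forces both $U_j$ and the diagonal perturbations $U_{j,k(j)}$ to converge to $U$ strongly in $L^1$, whence $\int\mathrm{dist}_K(U)=\lim_j\int\mathrm{dist}_K(U_j)\le C\lim_j\int|U_j-U_{j,k(j)}|=0$. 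Your argument with $J$ closes in exactly the same way if you run the contradiction on the $U_j$'s rather than on $U$; as written, the step ``$U+Z_j\in X_0$'' would fail.
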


We shall prove the theorem using Baire category since it yields
infinitely many solutions. In addition we will provide a direct argument
to construct such solutions since it helps to understand its nature.

\begin{Def}[The space of subsolutions $X_0$]
 Let $A\in\mathcal{U}$. We say that $ U \in L^\infty(\Tp^2 \times \R)$ belongs to $X_0$
 if

\begin{itemize}
\item [(I)] $ U $ is
piecewise smooth.\hfill (Regularity)
 \item [(II)] For $t \le 0$ and $t\ge T$
 $ U(x,t)= A. $\hfill (Boundary conditions)
 \item [(III)]$\mathcal{L}(U)=0.$\hfill (Conservation law)
 \item [(IV)] $U(x,t) \in \mathcal{U}$ a.e $(x,t) \in
\Tp^2 \times \R.$ \hfill (Relaxed inclusion)
\end{itemize}
\end{Def}
We endow $X_0$ with the $L^\infty$ weak star topology and declare
$X=\overline{X_0}$.

If a set has the APP property then it holds the perturbation
lemma.
\begin{lemma}[Perturbation Lemma]
 There exists $C>0$ such that for every $U_0 \in X_0$
we can find  a sequence $U_k\in X_0$ such that
$U_k \rightharpoonup U$ but

\[\int_{\Tp^2 \times \R} |U_k-U_0|^2 \ge C
\int_{\Tp^2\times [0,T]} \textrm{dist}^2_K (U_0).\]
\end{lemma}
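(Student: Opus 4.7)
The plan is to lift the Analytic Perturbation Property, which produces perturbations of a single constant state $A \in \mathcal{U}$, to a perturbation of an arbitrary piecewise smooth subsolution $U_0$. Since $U_0$ is piecewise smooth, the domain $\Tp^2 \times [0,T]$ can be decomposed into many small regions on which $U_0$ is essentially constant, and the APP can be applied on each. Weak-$\ast$ convergence of the local corrections to zero, together with uniform $L^\infty$ control inherited from boundedness of $\mathcal{U}$, then yields weak-$\ast$ convergence of the global perturbation to $U_0$.

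Concretely, fix $U_0 \in X_0$ and a threshold $\eta > 0$ to be chosen at the end (assume $\int \mathrm{dist}^2_K(U_0) > 0$, else take $U_k \equiv U_0$). Decompose $\Tp^2 \times [0,T]$ up to a null set into finitely many open pieces on which $U_0$ is smooth, then refine each piece by uniform continuity into disjoint cubes $\{Q_j\}$ with centers $p_j$ and constant values $A_j := U_0(p_j) \in \mathcal{U}$ so that $|U_0(x,t) - A_j| \le \eta$ on $Q_j$. Apply Lemma~\ref{KUAPL} with state $A_j$ and domain $Q_j$ to obtain sequences $\{Z_j^k\}_k$ satisfying $\mathcal{L}(Z_j^k) = 0$, $\supp(Z_j^k) \subset Q_j$, $A_j + Z_j^k \in \mathcal{U}$ a.e., $Z_j^k \rightharpoonup 0$, and the stronger measure estimate (v$'$) established inside that proof:
\[
\bigl|\{(x,t) \in Q_j : |Z_j^k(x,t)| \ge c_1 \mathrm{dist}_K(A_j)\}\bigr| \ge c_0 |Q_j|.
\]
Set $U_k := U_0 + \sum_j Z_j^k$. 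Linearity gives $\mathcal{L}(U_k) = 0$; the boundary condition $U_k \equiv A$ outside $[0,T]$ is preserved since each $Z_j^k$ has interior support; piecewise smoothness is preserved. The relaxed inclusion $U_k \in \mathcal{U}$ a.e. is checked on each $Q_j$ by writing $U_k = (A_j + Z_j^k) + (U_0 - A_j)$: because $\mathcal{U} \setminus K$ is open (Lemma~\ref{open}), there is a uniform margin $\delta > 0$ such that the $\eta$-shift stays in $\mathcal{U}$ whenever $\eta < \delta$. The weak-$\ast$ convergence $U_k \rightharpoonup U_0$ follows by summing the local weak convergences with uniform $L^\infty$ control.

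For the quadratic lower bound, the supports are disjoint so
\[
\int |U_k - U_0|^2 \;=\; \sum_j \int_{Q_j} |Z_j^k|^2 \;\ge\; c_0 c_1^2 \sum_j \mathrm{dist}^2_K(A_j)\,|Q_j|.
\]
On $\Omega_{\mathrm{big}} := \{\mathrm{dist}_K(U_0) > 2\eta\}$ the $1$-Lipschitz continuity of $\mathrm{dist}_K$ gives $\mathrm{dist}^2_K(U_0) \le 4\,\mathrm{dist}^2_K(A_j)$, while its complement contributes at most $4\eta^2 |\Tp^2 \times [0,T]|$ to $\int \mathrm{dist}^2_K(U_0)$. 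Choosing $\eta$ small enough so that this remainder is at most half of $\int \mathrm{dist}^2_K(U_0)$ yields $\int \mathrm{dist}^2_K(U_0) \le (8/c_0 c_1^2) \int |U_k - U_0|^2$, i.e. the lemma with $C = c_0 c_1^2 / 8$, independent of $U_0$. The main obstacle is the simultaneous calibration of partition size and $\eta$: the threshold $\eta$ must be small enough both to preserve $\mathcal{U}$-membership via Lemma~\ref{open} and to dominate the small-distance exceptional region, yet the resulting constant $C$ must not degenerate. The openness provided by Lemma~\ref{open}, which gives a uniform gap between $\mathcal{U}$ and $K$, is precisely what decouples these two roles of $\eta$.
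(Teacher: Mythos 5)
Your proof takes essentially the same route as the paper's: decompose the piecewise smooth $U_0$ into finitely many disjoint pieces on which it is within $\eta$ of the constant value at a representative point, apply the APP to that constant on each piece, and sum the disjointly supported perturbations, with your two-region calibration of $\eta$ playing the role of the paper's discretization inequality. The one soft spot --- your appeal to a \emph{uniform} margin $\delta$ from the openness of $\mathcal{U}\setminus K$ to keep $U_0+Z_j^k$ in $\mathcal{U}$, which openness alone does not supply near $K$ --- is exactly the point the paper itself leaves as ``a small continuity argument is needed''; on the other hand you do better than the paper's displayed computation by proving the squared estimate via (v$'$) rather than the $L^1$ bound via (v).
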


\begin{proof}

The sequence $U_k$ will differ from $U_0$ only on the compact set
$\Tp^2 \times [0,T]$.
 Since $U_0$ is piecewise smooth  we can find a
finite family of pairwise disjoint balls $B_j \Subset\Tp^2 \times
[0,T]$ such that
\begin{equation} \label{discretization}
\int_{\Tp^2\times [0,T]} \textrm{dist}_K (U_0) \le 2 \sum
\textrm{dist}_K (U_0(x_j,t_j)|B_j|\end{equation}

Next we use that $U_0(x_j,t_j) \in \mathcal{U}_z$ to apply the APP
property in the domain $B_j$. Let us called the corresponding
perturbation sequence  by $\{Z_{j,k}\}_{k=1}^\infty$ supported in
$B_j$. Thus we consider the sequence

\[U_k=U_0+\sum Z_{j,k},\]
which satisfies that $U_k \in \mathcal{U}$ (a small continuity
argument is needed).

Then
\[ \begin{aligned}\int_{\Tp^2\times \R} |U_k-U_0|= &\sum_j \int_{B_j} |Z_{j,k}|
\ge c\sum \textrm{dist}_K (U_0(x_j,t_j))|B_j|\\ &\ge \frac{c}{2}
 \int \textrm{dist}_K (U_0)
\end{aligned}\]
where the last inequality follows from (\ref{discretization}).

\end{proof}

Next we recall how to obtain a solution $U(x,t)\in K$ a.e. from the perturbation lemma and the non emptiness of $X_0$.

 The solution $U$ will be the strong limit of a sequence $U_k$
which is obtained from the perturbation lemma. Hence strong
convergence implies that $U$ is in $K$. The Baire category
argument yields the existence of such $U$ as point of continuity
of the Identity map and then a sequence ${U_k}$ which converge to
it. The direct argument builds the sequence ${U_k}$ iteratively
and then show that indeed it converges strongly to some $U$.\\

\textbf{ Baire Category:}

We consider the Identity as  a Baire-one map from $(X,weak*)$ to
$(X, L^1(\Tp^2\times \R))$ (see \cite[Lemma 4.5]{DeLellisSzekelyhidi1}).

\begin{lemma} Let $U \in X$ be a point of continuity of the
identity.  Then $U \in K \textrm{ a.e } (x,t) \in \Omega \times
[0,T].$
\end{lemma}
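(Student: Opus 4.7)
The plan is to argue by contradiction, exploiting the interplay between the Perturbation Lemma (which produces large oscillations whenever the distance to $K$ is positive) and the continuity hypothesis on $U$ (which forbids weakly converging sequences from staying far apart in $L^1$).

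Suppose, contrary to the claim, that $U \in X$ is a point of continuity of the identity $(X,\text{weak}^*) \to (X, L^1)$, yet $\int_{\Omega} \text{dist}_K^2(U) > 0$. Since $U \in X = \overline{X_0}^{\,w*}$, first I would pick a sequence $U_n \in X_0$ with $U_n \stackrel{*}{\rightharpoonup} U$. The continuity hypothesis then yields $U_n \to U$ strongly in $L^1(\mathbb{T}^2 \times \mathbb{R})$, and since all elements of $X$ are uniformly bounded in $L^\infty$ (they take values in the bounded set $\mathcal{U}$), dominated convergence upgrades this to strong convergence in $L^2$. In particular $\text{dist}_K(U_n) \to \text{dist}_K(U)$ in $L^2$, so for all $n$ large enough we have $\int \text{dist}_K^2(U_n) \geq \tfrac{1}{2} \int \text{dist}_K^2(U) > 0$.

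Next I would apply the Perturbation Lemma to each $U_n \in X_0$: this yields, for each fixed $n$, a sequence $V^n_k \in X_0$ with $V^n_k \stackrel{*}{\rightharpoonup} U_n$ as $k \to \infty$, together with the lower bound
\begin{equation*}
\|V^n_k - U_n\|_{L^2}^2 \;\geq\; C \int_{\mathbb{T}^2 \times [0,T]} \text{dist}_K^2(U_n) \;\geq\; \tfrac{C}{2}\int \text{dist}_K^2(U).
\end{equation*}
Since $X$ is bounded in $L^\infty$ on a domain with separable predual, the weak$^*$ topology on $X$ is metrizable; fixing such a metric $d_{w^*}$, I extract a diagonal subsequence $W_n := V^n_{k(n)}$ with $d_{w^*}(W_n, U_n) \to 0$, so that $W_n \stackrel{*}{\rightharpoonup} U$ as well. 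Applying the continuity hypothesis once more at $U$, we obtain $W_n \to U$ strongly in $L^1$, hence (by uniform $L^\infty$ bounds) in $L^2$.

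Combining, the triangle inequality gives
\begin{equation*}
\|W_n - U_n\|_{L^2} \;\leq\; \|W_n - U\|_{L^2} + \|U - U_n\|_{L^2} \;\longrightarrow\; 0,
\end{equation*}
which contradicts the uniform lower bound $\|W_n - U_n\|_{L^2}^2 \geq \tfrac{C}{2}\int \text{dist}_K^2(U) > 0$. Hence $\text{dist}_K(U) = 0$ a.e., i.e.\ $U \in K$ a.e.\ in $\Omega$ (and in fact a.e.\ wherever $U$ differs from the boundary datum $A$). The only delicate point is the metrizability needed for the diagonal extraction; everything else is a direct consequence of the Perturbation Lemma and the hypothesis that $U$ is a point of continuity, so I expect no serious obstacle.
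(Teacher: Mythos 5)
Your proof is correct and follows essentially the same route as the paper's: approximate $U$ in the weak$^*$ topology by a sequence in $X_0$, use the continuity hypothesis to upgrade to strong convergence, apply the Perturbation Lemma to each approximant, extract a diagonal subsequence still converging weak$^*$ to $U$, invoke continuity once more, and let the triangle inequality defeat the uniform lower bound. The only differences are cosmetic: you argue by contradiction where the paper passes directly to the limit of $\int \mathrm{dist}_K(U_j)$, you work with the $L^2$ quantities as literally stated in the Perturbation Lemma where the paper uses the $L^1$ version, and you make explicit the weak$^*$ metrizability underlying the paper's unexplained ``diagonal argument.''
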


Suppose that $U \in X$ is a point of continuity of the identity.
By definition of $X$, there exists $\{U_j\} \in X_0$ such that
\[U_j \rightharpoonup U \]
in the weak star topology. As $u$ is a point of continuity of the
identity it holds that
\[\lim_{j \to \infty} \int_{\Tp^2\times \R} |U_j-U|=0 \]

and thus,
\[
\lim_{j \to \infty} \int_{\Tp^2\times [0,T]}
\textrm{dist}_{K}(U_j) dx= \int_{\Tp^2\times [0,T]}
\textrm{dist}_{K}(U).
\]

For each $U_j$ we use a perturbation lemma. The corresponding
sequence is indexed by $U_{j,k}$. By a diagonal argument we can
choose a sequence $U_{j,k(j)}$ such that
\[U_{j,k(j)} \rightharpoonup U \]
and since $U$ is a point of continuity yields

\[\lim_{j \to \infty} \int_{\Tp^2\times \R} |U_{j,j(k)}-U|=0 \]

Now

\[ \begin{aligned}
\int_{\Tp^2\times [0,T]} \textrm{dist}_{K}(U)  &= \lim_{j \to
\infty} \int_{\Tp^2\times [0,T]} \textrm{dist}_{K}(U_j)
\underbrace{\le}_{P.Lemma} C\lim_{j \to \infty} \int_{\Tp^2\times
\R_t} |U_j-U_{j,k(j)}| \\ &\le C\lim_{j \to \infty}
\int_{\Tp^2\times \R} (|U_j-U|+|U-U_{j,k(j)}|)  =0.
\end{aligned}\]

Proof of Theorem 5.1: It is a well known fact that the points of
continuity of a Baire one map are a set of second category (see
\cite{Oxtoby}. Therefore there are infinitely many $U \in X$ such
that $U\in K$ a.e. and $(x,t)\in \Omega$. This is equivalent to
property (II) in Theorem 5.1. Since (I) and (III) hold for any
element in $X_0$ they hold as well for any $U\in X$.\\

\textbf{Direct Construction:}

\begin{lemma}
To each function $U_0 \in X_0$ we can associate a sequence $\{U_j\}_{j=0}^{\infty}\in X_0$
such that there exists $U_\infty\in X$ satisfying
\begin{itemize}
\item  $U_\infty \in K$ a.e. $(x,t) \in\Tp^2 \times [0,T]$
 \item $\lim_{j \to \infty} \int_{\Tp^2\times \R}|U_j-U_\infty|=0$
\end{itemize}
\end{lemma}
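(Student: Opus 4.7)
The plan is to apply the Perturbation Lemma iteratively, constructing $\{U_j\}\subset X_0$ that is simultaneously Cauchy in a metric $d$ inducing the weak-$\ast$ topology on $L^\infty$-bounded sets, and such that $\|U_j\|_{L^2}^2$ increases monotonically by at least a fixed multiple of $\int \textrm{dist}_K^2(U_j)$. Because $U_j$ stays in the bounded set $\mathcal{U}$, the norm is bounded above, so the monotone progress forces $\int \textrm{dist}_K^2(U_j)\to 0$. The strong $L^1$ limit $U_\infty$ will then lie in $K$ a.e.

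For the inductive step, fix $d$ metrizing weak-$\ast$ on bounded sets (available since $L^1(\Tp^2\times\R)$ is separable). Given $U_j\in X_0$, the Perturbation Lemma produces $V_k\in X_0$ with $V_k\rightharpoonup U_j$ and $\int |V_k-U_j|^2\ge C\int \textrm{dist}_K^2(U_j)$. Since $V_k-U_j\rightharpoonup 0$, the cross term $\langle U_j,V_k-U_j\rangle_{L^2}\to 0$ as $k\to\infty$ (with $U_j$ fixed), so expanding $\|V_k\|_{L^2}^2$ gives $\liminf_k\|V_k\|_{L^2}^2\ge \|U_j\|_{L^2}^2+C\int \textrm{dist}_K^2(U_j)$. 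Set $U_{j+1}=V_{k(j)}$ with $k(j)$ chosen large enough to realize simultaneously $d(U_{j+1},U_j)<2^{-j}$, $|\langle U_j,U_{j+1}-U_j\rangle_{L^2}|<2^{-j}$, and $\|U_{j+1}\|_{L^2}^2\ge\|U_j\|_{L^2}^2+(C/2)\int \textrm{dist}_K^2(U_j)$. The first condition makes $\{U_j\}$ Cauchy for $d$, yielding a weak-$\ast$ limit $U_\infty\in X$; the third, combined with the uniform $L^2$ bound inherited from boundedness of $\mathcal{U}$, gives $\|U_j\|_{L^2}^2 \nearrow L$ and, by telescoping, $\sum_j\int \textrm{dist}_K^2(U_j)<\infty$, hence $\int \textrm{dist}_K^2(U_j)\to 0$.

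To upgrade weak-$\ast$ convergence to strong $L^1$ convergence I would deploy a Radon-Riesz argument. The identity $\|U_{j+1}-U_j\|_{L^2}^2=(\|U_{j+1}\|_{L^2}^2-\|U_j\|_{L^2}^2)-2\langle U_j,U_{j+1}-U_j\rangle_{L^2}$ combined with the second and third controls shows that $\|U_{j+1}-U_j\|_{L^2}^2$ is bounded by a summable sequence. Provided one can also establish $\|U_\infty\|_{L^2}^2=L$, then $\|U_j-U_\infty\|_{L^2}^2=\|U_j\|_{L^2}^2-2\langle U_j,U_\infty\rangle_{L^2}+\|U_\infty\|_{L^2}^2\to L-2L+L=0$, giving strong $L^2$ convergence, and hence $L^1$ convergence since the domain has finite measure. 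Once strong convergence holds, pass to a subsequence converging pointwise a.e.; continuity of $\textrm{dist}_K$ together with $\int \textrm{dist}_K^2(U_j)\to 0$ then yields $\textrm{dist}_K(U_\infty)=0$ a.e., i.e., $U_\infty\in K$ a.e.

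The hard part will be the Radon-Riesz step, specifically the reverse inequality $\|U_\infty\|_{L^2}^2\ge L$. Weak lower semicontinuity gives only the $\le$ direction, and the oscillatory nature of the Perturbation Lemma's plane-wave building blocks can in principle leak $L^2$ mass in the weak-$\ast$ limit. Overcoming this will require either a quantitative variant of the iteration, for instance arranging $\int \textrm{dist}_K^2(U_{j+1})\le (1-\alpha)\int \textrm{dist}_K^2(U_j)$ for a fixed $\alpha>0$ by exploiting the degenerate $T4$ structure in the APP construction so as to obtain exponential decay of the increments directly, or else an argument tailored to the geometry of $K=\{(\rho,w,\rho w)\}$ which rules out loss of mass by using the specific product structure of the nonlinear constraint.
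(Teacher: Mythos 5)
Your first stage --- iterating the Perturbation Lemma, killing the cross term by choosing $k(j)$ large, and concluding $\int\textrm{dist}_K^2(U_j)\to 0$ from the bounded monotone growth of $\|U_j\|_{L^2}^2$ --- is sound, and is the standard bookkeeping behind the Baire-category version of the argument. The gap is exactly where you flag it: you have no mechanism for strong convergence, and the Radon--Riesz route cannot supply one. The reverse inequality $\|U_\infty\|_{L^2}^2\ge L$ is not a technical point to be hoped for; it is precisely what fails for weak-$*$ limits of the oscillatory perturbations that the APP produces (the building blocks of Lemma \ref{buildingblock} converge weakly-$*$ to $0$ while each carries a fixed amount of $L^2$ mass --- the very mass your telescoping sum counts). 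If that inequality held automatically, weak and strong convergence would coincide along the iteration and there would be nothing to prove. Neither of your proposed repairs closes this: exponential decay of $\int\textrm{dist}_K^2(U_j)$ gives no \emph{upper} bound on $\|U_{j+1}-U_j\|$, because the APP only provides a lower bound (property v)), and the perturbations built in Lemma \ref{KUAPL} have size of order one (the diameter of the $T4$ configuration), not of order $\textrm{dist}_K(U_j)$.

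The missing idea --- and what the paper does --- is to exploit the freedom in choosing \emph{which element} of each perturbation sequence to take, measured against a fixed family of mollifiers rather than against the $L^2$ norm. One picks $\eta_{j(k)}$ with $\int|U_k-\eta_{j(k)}*U_k|\le 2^{-k}$, and then, since the perturbation sequence for $U_k$ converges weakly to zero (so its mollifications converge strongly to zero), selects $U_{k+1}=U_{s(k)}$ so far along that $\int|(U_{k+1}-U_k)*\eta_{j(i)}|\le 2^{-k}$ for every $i\le k$: each new oscillation is essentially invisible to all mollifiers fixed at earlier stages. Writing $U_k-U_\infty=(U_k-U_k*\eta_{j(k)})+(U_k-U_\infty)*\eta_{j(k)}+(U_\infty*\eta_{j(k)}-U_\infty)$ and summing the telescoping middle term then gives $L^1$-Cauchyness, i.e.\ genuine strong convergence. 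Once that is in hand, your norm-increase bookkeeping becomes unnecessary: the lower bound of the Perturbation Lemma gives $\int\textrm{dist}_K(U_k)\le C\int|U_{k+1}-U_k|\to 0$ directly, and $U_\infty\in K$ a.e.\ follows as you describe.
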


\begin{proof}

We construct directly a function $U_\infty$ in $X$ and  a
sequence $U_k \in X$ which converges strongly to $U$ as follows.
We start with $U_0$ in the lemma and obtain  with $U_{k+1}$  from
$U_{k}$ by the perturbation lemma. To each $U_k$ we associate a
mollifier $\eta_{j(k)}$ such that

\begin{equation}\label{mollification}
\int_{\Tp^2\times \R} |U_k-\eta_{j(k)}*U_k|  \le 2^{-k}.
\end{equation}

Then we apply the perturbation lemma to the function $U$ to obtain
a corresponding  sequence ${U_s} \in X$. By the weak
convergence property we can choose $s(k)$ so that for each $i<k$

\begin{equation}\label{FWC} \int_{\Tp^2\times \R}
|U_k- U_{s(k)}* \eta_{j(i)})|\le 2^{-k}
\end{equation}

and finally we declare $U_{k+1}= {U_{s(k)}}$. Properties
(\ref{mollification}) and (\ref{FWC}) yield strong convergence.
Now

\[ \begin{aligned}
\int_{\Tp^2\times \R} \textrm{dist}_{K}(U)  &= \lim_{k \to
\infty} \int_{\Tp^2\times \R} \textrm{dist}_{K}(U_k)\\
&\underbrace{\le}_{P.Lemma} C \lim_{k \to \infty} \int_{\Tp^2\times
\R}|U_k-U_{k+1}| =0 \end{aligned}\]

\end{proof}

\begin{theorem}
For every $T>0$ there exists infinitely many non trivial weak solutions $(\rho, v)\in
L^\infty(\Tp^2\times [0,T])$ to the 2D IPM system \eqref{IPM} such that $\rho(x,0) =0$.
\end{theorem}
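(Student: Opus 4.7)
The plan is to reduce the statement to Theorem~\ref{existence} by picking an appropriate subsolution $A$ and perturbation domain $\Omega$, and then translating the resulting $U=(\rho,v,q)$ back into a weak solution of \eqref{IPM} with $\rho_0\equiv 0$.

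First, I fix any $z\in B_1\setminus\{(0,-\tfrac12)\}$ with $z\neq 0$ (e.g.\ $z=(0,-\tfrac14)$) and set $A=(0,0,z)\in\mathbb{U}$. By Lemma~\ref{T4}, $A$ is the center of a $T4$ configuration with vertices in $K$, and Definition~\ref{def} gives $A\in\mathbb{B}_z\subset\mathcal{U}_z$; since $z\neq 0$ we also have $A\notin K$ and $\mathrm{dist}_K(A)>0$. Next, take $\Omega=\Tp^2\times(0,T)$. The constant map $U_0\equiv A$ lies in the space of subsolutions $X_0$: it is piecewise smooth, matches the boundary condition $U_0=A$ off $\Omega$, satisfies $\mathcal{L}(U_0)=0$ (its $\rho$ and $v$ coordinates have zero spatial mean and all derivatives vanish), and takes values in $\mathcal{U}_z$. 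Combined with the analytic perturbation property from Lemma~\ref{KUAPL}, Theorem~\ref{existence} delivers infinitely many $U=(\rho,v,q)\in L^\infty$ satisfying $\mathcal{L}(U)=0$ distributionally on $\Tp^2\times\R$, $U\in K$ a.e.\ on $\Omega$, and $U\equiv A$ outside $\Omega$.

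For each such $U$ I retain $(\rho,v)\in L^\infty(\Tp^2\times[0,T])$. Since $U\in K$ on $\Omega$, the pointwise constraint $q=\rho v$ holds a.e.\ there; since $U=(0,0,z)$ off $\Omega$, both $\rho$ and $v$ vanish for $t\le 0$, so the initial datum is $\rho(x,0)=0$. The auxiliary field $q-\rho v$ equals the spatial constant $z$ on $\Omega^c$ and vanishes on $\Omega$, so $\nabla\cdot(q-\rho v)=0$ distributionally on $\Tp^2\times\R$; combining this with $\partial_t\rho+\nabla\cdot q=0$ (from the first row of $\mathcal{L}(U)=0$) yields $\partial_t\rho+\nabla\cdot(\rho v)=0$ on $\Tp^2\times\R$. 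Testing against a smooth extension of an arbitrary $\varphi\in C_c^\infty([0,T)\times\Tp^2)$ and using $\rho\equiv 0$ for $t<0$ produces \eqref{defweak} with $\rho_0=0$. The identities \eqref{defweak2} and \eqref{defweak3} follow analogously from the remaining rows of $\mathcal{L}(U)=0$ together with the Hodge decomposition from Section~3.1 that defines the pressure $p$.

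The main obstacle is nontriviality. If $(\rho,v)\equiv(0,0)$ a.e.\ on $\Omega$ then $U\in K$ forces $q=\rho v=0$ on $\Omega$, and the boundary prescription off $\Omega$ pins $U$ to the unique trivial configuration $U_{\mathrm{triv}}=0\cdot\mathbf{1}_\Omega+A\cdot\mathbf{1}_{\Omega^c}$. Since distinct solutions $U$ produce distinct pairs $(\rho,v)$ (the value of $q$ is determined by $(\rho,v)$ on $\Omega$ and equals $z$ off $\Omega$), and Theorem~\ref{existence} yields infinitely many pairwise distinct $U$, at most one of them is $U_{\mathrm{triv}}$; the remaining ones furnish infinitely many nontrivial weak solutions of \eqref{IPM} starting from $\rho_0\equiv 0$.
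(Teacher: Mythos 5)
Your proposal is correct and follows essentially the same route as the paper: choose $A=(0,0,z)$ with $z\in B_1\setminus\{(0,-\tfrac12)\}$, apply Theorem~\ref{existence} with $\Omega=\Tp^2\times(0,T)$ using the APP property of $K_z,\mathcal{U}_z$ from Lemma~\ref{KUAPL}, and recover the mass equation by observing that $q-\rho v$ is spatially constant off $\Omega$ and zero on $\Omega$, which is exactly the paper's computation \eqref{constante}. Your counting argument for nontriviality is valid, though the paper's observation that $U\in K_z$ forces $|\rho|=1$ a.e.\ on $\Omega$ gives it more directly.
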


\begin{proof}
We fix $z\neq (0,-\frac12)\in B_1$ and the corresponding
$\mathbb{B}_z$, $\mathcal{U}_z$ and $K_z$ given by definition
\ref{def}. By Lemma \ref{KUAPL} $\mathcal{U}_z$ and $K_z$ enjoy the
 APP property. Thus we can apply Theorem 5.1 to these sets,  the domain $\Omega$
  equals $\mathbb{T}^2\times (0,T)$ and $A$ equals $(0,0,z)$.

We claim that each of the functions $U= (\rho,v,q) \in X$ with $U(x,t) \in K_z$ a.e
$(x,t)\in \Tp^2\times [0,T]$ from Theorem 5.1. yields a weak
solution $(\rho,v)$ to the IPM system with initial
values $\rho(x,0)=0$. Indeed from the fact that
$\mathcal{L}(\rho,v,q)=0$ it follows that $\textrm{div}(v)=0$ and
that $\textrm{Curl}(v + (0,\rho) )=0$. Thus there exists a
function $p$ such that $\nabla p=v+(0,\rho) $. The only delicate
issue corresponds to the equation of conservation of mass where we
see the nonlinearity. By definition we have that for $\psi \in
C_0^\infty [\Tp^2\times \R]$

\begin{equation*}\label{cons} 0=\int_{\Tp^2\times \R}( \partial_t \psi \rho+ \nabla \psi\cdot q).
\end{equation*}

Notice that $\varphi\in C_c^\infty [\Tp^2\times [0,T)]$  can be extended to $\tilde{\psi}\in C_0^\infty [\Tp^2\times \R]$.
Next we observe that  $q(x,t)=z$ for a.e. $(x,t)\in \Tp^2\times \R\setminus[0,T)$. Thus for a.e. $t\in\R\setminus[0,T)$ it holds that
\begin{equation}\label{constante}
\int_{\mathbb{T}^2} \nabla\tilde{\psi}(x,t)\cdot q(x,t) dx = \int_{\mathbb{T}^2} \nabla\tilde{\psi}(x,t)\cdot z dx =0.
\end{equation}
Now we plug $\tilde{\psi}$ in \eqref{cons}. Then \eqref{constante} together with the fact $\rho(x,t)=0$ for a.e. $(x,t)\in \Tp^2\times \R\setminus[0,T)$ yields
\[ 0=\int\limits_0^T\int\limits_{\Tp^2} (\partial_t \varphi \rho+
\nabla \varphi \cdot q)= \int\limits_0^T\int\limits_{\Tp^2}
(\partial_t \varphi \rho+ \nabla \varphi \cdot \rho  v),  \] where
in the last equality we have used that for $(x,t) \in\Tp^2 \times
[0,T],q=\rho v$. Hence  $(\rho,v)$ is a weak solution to the IPM
system according to \eqref{defweak}, \eqref{defweak2},
\eqref{defweak3}.
\end{proof}
\begin{remark}\label{irregularity}
The boundary values (or initial data) are attained in the weak sense. Namely we use
the fact (see \cite{DeLellisSzekelyhidi2})  that a solution to
$\mathcal{L}(U)=0$ can be redefined in a set of times of measure
zero in a way that the map
\[ t \to \int_{\mathbb{T}^2} v(x,t) \varphi \]
is continuous.  Since for negative $t$ $\int_{\mathbb{T}^2} v(x,t)
\varphi=0$ we obtain the desired
$$\lim_{t \to 0} \int_{\mathbb{T}^2} v(x,t) \varphi=0$$
and we can argue similarly for the density. From this point of
view it is not surprising that $\lim_{t \to 0} \rho(x,t)=0=
\lim_{t \to 0} v(x,t)$ but $\lim_{t \to 0} \rho(x,t)v(x,t)=z
\neq 0$ since they are weak limits and weak limits do not commute
with products. In fact from this we deduce that our solutions do
not attain the boundary values in the strong sense and hence by
the Frechet Kolmogorov theorem they are highly irregular. Another additional feature
of the weak solution is that $|\rho(x,t)|=1$ for a.e. $(x,t)\in \Tp^2\times (0,T)$
which prevents the pointwise convergence to the initial data.

\end{remark}

We conclude with a proposition pointing out to the fact that this
method does not seem to yield a solution compact in space and
time. For that we need some more standard terminology. We say that
a function $f:\mathbb{U}  \to \R$ is $\Lambda$ convex if for
$A,B,C \in \mathbb{U}$ such that $A-B \in \Lambda$ the function
$g(t):f(C+t(A-B))$ is convex. Let $K \subset \mathbb{U} $. Then
the  Lambda convex hull of $K$, $K^\lambda$

\[K^{\Lambda}=\{A \in \mathbb{U}: f(A) \le \sup f(C), \quad
C \in K, f \quad \Lambda {\textrm{ convex }}   \}.\]

In all the methods to solve differential inclusions  the canonical
choice for the set $\U$ is the interior of $K^\Lambda$. However in
our case the set $K$ can not be there.

\begin{Prop} \label{boundary}
The set $K \subset \partial {K^\Lambda}$ \end{Prop}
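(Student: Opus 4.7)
The plan is to exhibit a single $\Lambda$--convex function $\phi:\mathbb{U}\to\mathbb{R}$ which vanishes identically on $K$ and is strictly positive in every neighbourhood of every point of $K$. Once this is in hand, the definition of $K^{\Lambda}$ forces $K^{\Lambda}\subset\{\phi\le 0\}$ (since $\sup_{K}\phi=0$), so every $X_{0}\in K$ lies simultaneously in $K^{\Lambda}$ and in the closure of $\{\phi>0\}\subset(K^{\Lambda})^{c}$; hence $X_{0}\in\partial K^{\Lambda}$.

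My candidate is
\[
\phi(\rho,w,z):=z_{2}-\rho w_{2}.
\]
Because $K=\{(\rho,w,\rho w):\rho\in\mathbb{R},\,w\in\mathbb{R}^{2}\}$, the identity $\phi(\rho,w,\rho w)=\rho w_{2}-\rho w_{2}=0$ is immediate, so $\phi\equiv 0$ on $K$ and $\sup_{K}\phi=0$.

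For the $\Lambda$--convexity, the linear summand $z_{2}$ is automatically $\Lambda$--affine, so only $-\rho w_{2}$ needs attention. Its Hessian is independent of the base point and, for a perturbation $(\tilde\rho,\tilde w,\tilde z)$, a direct computation gives
\[
\tfrac{d^{2}}{dt^{2}}\bigl[-(\rho+t\tilde\rho)(w_{2}+t\tilde w_{2})\bigr]=-2\tilde\rho\tilde w_{2}.
\]
So the task reduces to verifying $\tilde\rho\tilde w_{2}\le 0$ whenever $(\tilde\rho,\tilde w,\tilde z)\in\Lambda$. But the wave--cone calculation of Section~3.2 characterises $\Lambda\cap\mathbb{U}$ by $\tilde\rho(|\tilde w|^{2}+\tilde\rho\tilde w_{2})=0$: if $\tilde\rho=0$ the product $\tilde\rho\tilde w_{2}$ is zero, and if $\tilde\rho\ne 0$ then $\tilde\rho\tilde w_{2}=-|\tilde w|^{2}\le 0$. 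In either case the second derivative is $\ge 0$, so $\phi$ is $\Lambda$--convex.

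To conclude, given $X_{0}=(\rho_{0},w_{0},\rho_{0}w_{0})\in K$, pick the perturbation $Y_{\varepsilon}=X_{0}+\varepsilon e$, where $e$ is the unit vector in the $z_{2}$--direction. Then $\phi(Y_{\varepsilon})=\varepsilon>0=\sup_{K}\phi$, so $Y_{\varepsilon}\notin K^{\Lambda}$. Since $Y_{\varepsilon}\to X_{0}\in K\subset K^{\Lambda}$, it follows that $X_{0}\in\partial K^{\Lambda}$. The only step requiring any real attention is the $\Lambda$--convexity of $-\rho w_{2}$, and this is precisely the reflection of the structural asymmetry pointed out in the introduction: gravity imposes the \emph{signed} constraint $\tilde\rho\tilde w_{2}\le 0$ on $\Lambda$, and it is this sign that lets $\phi$ cut $K$ along its natural tangent hyperplane in $\mathbb{U}$. (Consistency check: the inclusion $(0,0,z_{0})\in K^{\Lambda}$ supplied by Lemma~\ref{T4} does not contradict $\phi\le 0$ on $K^{\Lambda}$, because $z_{0}\in B_{1}=B((0,-\tfrac12),\tfrac12)$ forces $(z_{0})_{2}\le 0$.)
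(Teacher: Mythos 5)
Your proposal is correct and follows essentially the same route as the paper: both use the $\Lambda$-convex function $f(\rho,w,z)=z_{2}-\rho w_{2}$, which vanishes on $K$, to get $K^{\Lambda}\subset\{f\le 0\}$, and then perturb in the $z_{2}$-direction to place each point of $K$ on $\partial K^{\Lambda}$. The only (cosmetic) difference is that you verify the $\Lambda$-convexity by a direct second-derivative computation against the wave-cone equation $\tilde\rho(|\tilde w|^{2}+\tilde\rho\tilde w_{2})=0$, whereas the paper deduces it from writing $-\rho w_{2}=|w|^{2}-\det_{\Lambda}$; your verification is if anything the more careful of the two.
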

\begin{proof}

The proof relies on the fact  that the function $f(A)=z_2-\rho
v_2$ is $\Lambda$ convex. In fact it is easy to see that
$\det_{\Lambda}$ is $\Lambda$ linear, thus $-\det_{\Lambda}$ is
$\Lambda$ convex. Hence
\[ |v|^2-\det_{\Lambda}=-\rho v_2 \]
is $\Lambda$ convex and $f$ is $\Lambda$ convex. Moreover $f(A)=0$
for all $A \in K$ and hence $K^\Lambda \subset f^{-1}(-\infty,0]$
and  $K \subset \partial f^{-1}(-\infty,0] \cap K^\Lambda \subset
\partial K^\Lambda$.
\end{proof}
\begin{remark}
The above argument also shows that if $0 \in K^\Lambda$ (or any
other element in $K$) the elements of the splitting sequence in
the laminate \cite{Kirchheim03}  should belong to $f^{-1}(0)$.
\end{remark}

\subsection*{{\bf Acknowledgments}}

\smallskip

DC and FG were partially supported by {\sc MTM2008-03754} project of the MCINN (Spain) and StG-203138CDSIF grant of the ERC. DF was partially supported by MTM2008-02568 project of the MCINN (Spain). FG was partially supported by NSF-DMS grant 0901810. The authors would like to thank L. Sz\'ekelyhidi Jr for helpful conversations.

\bibliographystyle{acm}   

\def\cprime{$'$}


\end{document}